\title{Unit Interval Orders of Open  and Closed Intervals}
\author{Alan Shuchat \\  
\small Department of Mathematics \\ 
\small Wellesley College \\
\small Wellesley, MA 02481  \\ 
\small \texttt {ashuchat@wellesley.edu} \and
Randy Shull \\ 
\small Department of
Computer Science        \\ 
\small Wellesley College
\\ 
\small Wellesley, MA 02481   \\
\small \texttt{rshull@wellesley.edu}
\and Ann N.\ Trenk \\ 
\small Department of Mathematics \\ 
\small Wellesley College \\
\small Wellesley, MA 02481  \\ 
\small \texttt {atrenk@wellesley.edu}\\
}
\date{January 12, 2015}
\newtheorem{theorem}{Theorem}
\newtheorem{defn}[theorem]{Definition}
\newtheorem{remark}[theorem]{Remark}
\newtheorem{prop}[theorem]{Proposition}
\newcommand{\qed}{\mbox{$\Box$}}
\def\reals{{\mathbb R}}
\def\integers{{\mathbb Z}}
\newcommand{\fourone}{\mbox{${\bf 4}+{\bf 1}$}}
\newcommand{\threeone}{\mbox{${\bf 3}+{\bf 1}$}}
\newcommand{\threeoneone}{\mbox{${\bf 3}+{\bf 1}+{\bf 1}$}}
\newcommand{\twotwo}{\mbox{${\bf 2}+{\bf 2}$}}
\begin{document}

  \maketitle

\bibliographystyle{plain}

\begin{center} {\sl ABSTRACT} \end{center}

\begin{quotation}
A poset $P = (X,\prec)$ is a unit OC interval order if there exists a representation that   assigns an open or closed real interval $I(x)$ of unit length to each $x \in P$ so that $x \prec y$ in $P$ precisely when each point of $I(x)$ is less than each point in $I(y)$.  
In this paper we give a forbidden poset characterization of the class of unit OC interval orders and an  efficient algorithm for recognizing the class.  The algorithm takes a poset $P $ as input and either produces     a representation or returns a forbidden poset induced in $P$.
\end{quotation}


\section{Introduction}
In \cite{RaSz13}, the authors introduce the class of unit intersection graphs of real intervals from the set   $\{(a,a+1): a \in \reals\} \cup \{[a,a+1]: a \in \reals\}$.  This class, which they call  ${\cal{U}}^\pm,$ includes $K_{1,3}$ and thus  is larger than the set of unit interval graphs.  They give a forbidden subgraph characterization of the class.  Joos~\cite{Jo13} gives a forbidden graph characterization of the class of unit mixed interval graphs in which intervals may be open, closed, or half-open.  Independently in \cite{ShShTrWe15} we also give a forbidden characterization of this class and additionally an algorithm that produces a representation when a  graph has none of the forbidden subgraphs.  Le and Rautenbach~\cite{LeRa13} study a version of unit mixed interval graphs in which the endpoints of intervals must be integers.

In this paper, we return to the original class in which intervals may be open or closed (but not half-open) and study it from the perspective of ordered sets.  
  We give a forbidden poset characterization as well as a quadratic-time recognition and realization algorithm.    Our algorithm accepts any poset as input and either returns a representation of it or a forbidden subposet.

\subsection{Preliminaries}

The posets $P = (X, \prec)$  in this paper are  irreflexive, and we write $x \parallel y$ when elements (points)  $x, y \in X$ are incomparable.  For more background on ordered sets, see \cite{Tr92}, and for interval graphs see \cite{Go80}.
We denote the left and right endpoints of a real interval $I(v)$ respectively by $L(v)$ and $R(v)$.  An interval is \emph{open} if it does not contain its endpoints and \emph{closed} if it does.   

 \begin{defn} {\rm
  A poset $P = (X, \prec)$  is an \emph{interval order} if each $x \in X$ can be assigned a closed real interval $I(x)$ so that $x \prec y$ if and only if $R(x) < L(y)$.  The set of intervals ${\cal I} = \{I(x) : x \in X \}$ is a (\emph{closed}) \emph{interval representation} of $P$.  If in addition, all intervals in the representation have the same length, then $P$ is a \emph{unit interval order} and ${\cal I} $ is a  \emph{unit} (\emph{closed}) \emph{interval representation} of $P$.
  }
  \end{defn}
  
  It is well-known (e.g., see  Lemma 1.5 in \cite{GoTr04}) that a set of closed intervals $\cal I$ representing a poset $P$ can be transformed into another set ${\cal I'}$ of closed intervals also representing $P$ so that the endpoints of the intervals in ${\cal I'}$ are distinct.  Furthermore, this can be accomplished so that  if $\cal I$ is a unit representation of $P$, so is $ {\cal I'}$.
   It follows that  the class of interval orders is  the same if   representations consist  entirely of closed intervals or entirely of open intervals.  Likewise, the class of unit interval orders is the same whether  representations consist entirely of  closed intervals or entirely of open intervals.   The situation is different if both open and closed intervals are allowed.    

 \begin{defn}  {\rm   Let ${\cal R}$ be the set of all intervals on the real line that are either open or closed.  An \emph{OC interval representation} of a poset $P = (X, \prec)$ is an assignment of an interval $I(x) \in \cal R$ to each  $x \in X$, so that  $x \prec y$ if and only if each point in $I(x)$ is less than each point in $I(y)$.   We say a poset is an \emph{OC interval order} if it has an OC interval representation ${\cal I}$.  If in addition, all intervals in ${\cal I}  = \{I(x) : x \in X \}$ have the same length, then $P$ is a \emph{unit OC interval order} and we call ${\cal I}$ a \emph{unit} \emph{OC interval representation} of $P$.}
 \label{def-rep}
 \end{defn}  
 
 The equivalence of (1) and (2) in the following proposition is well-known and was shown by Fishburn \cite{Fi70}.  The equivalence of (1) and (3)   is part of a similar result given for interval graphs in \cite{RaSz13}.  A sketch of the latter equivalence is included here for completeness.

 \begin{prop}
 \label{prop-OC}
 The following are equivalent for a poset $P$:
 \begin{enumerate}
 \item  $P$ is an interval order.
 \item  $P$ contains no induced $\twotwo$.
 \item  $P$ is 
  an OC interval order.
  \label{OC-int-prop}
  \end{enumerate}
  \end{prop}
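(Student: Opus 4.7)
The plan is to combine Fishburn's theorem (giving $(1) \Leftrightarrow (2)$) with a direct argument for $(1) \Leftrightarrow (3)$. The direction $(1) \Rightarrow (3)$ is essentially immediate: a closed interval representation satisfying $x \prec y \iff R(x) < L(y)$ is automatically an OC representation, because for two closed intervals the condition ``every point of $I(x)$ is less than every point of $I(y)$'' reduces to $R(x) < L(y)$.

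For $(3) \Rightarrow (1)$, my idea is to perturb an OC representation into a closed one by shrinking each open interval slightly. Given an OC representation ${\cal I}$ of $P$, let $\delta$ be the smallest positive gap between distinct endpoints appearing in ${\cal I}$ (well-defined since the set of endpoints is finite), and fix $\epsilon$ with $0 < \epsilon < \delta/2$. I would replace each open interval $(L(x), R(x))$ in ${\cal I}$ by the closed interval $[L(x) + \epsilon, R(x) - \epsilon]$, leaving closed intervals unchanged, and call the new family ${\cal I}'$. It then suffices to verify that ${\cal I}'$ is a closed interval representation of the same poset $P$, which gives $(1)$.

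The verification reduces to a case analysis on the four open/closed combinations for a pair $(I(x), I(y))$. When $x \prec y$ in ${\cal I}$, the OC condition unpacks to $R(x) < L(y)$ if both intervals are closed and to $R(x) \leq L(y)$ in the three other cases; since the perturbation moves each open right endpoint to the left and each open left endpoint to the right, the required strict inequality $R'(x) < L'(y)$ holds in every case. When $x \parallel y$, the same kind of analysis shows $R(x) \geq L(y)$ (strictly, whenever an open endpoint is involved) and symmetrically $R(y) \geq L(x)$; since any strictly positive difference between two endpoints of ${\cal I}$ is at least $\delta > 2\epsilon$, these inequalities survive the perturbation in the form $R'(x) \geq L'(y)$ and $R'(y) \geq L'(x)$, so $I'(x)$ and $I'(y)$ still overlap.

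The main obstacle is the bookkeeping across the four open/closed cases: one needs to check uniformly that no genuine gap in ${\cal I}$ gets closed and no overlap gets opened by a shift of size at most $\epsilon$. The choice $\epsilon < \delta/2$ is precisely calibrated to rule out both failures, since the only quantities that could be affected are differences of endpoints, and any nonzero such difference is bounded below by $\delta$.
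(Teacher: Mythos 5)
Your proof is correct, but it takes a different route from the paper's. For $(3)\Rightarrow(1)$ the paper does not perturb endpoints at all: for every incomparable pair $u\parallel v$ it picks a witness point $x_{uv}\in I(u)\cap I(v)$ and then replaces each $I(v)$ by the closed interval $[\min\{x_{uv}:u\parallel v\},\max\{x_{uv}:u\parallel v\}]$, which one checks is a closed representation of the same order. That construction sidesteps the open/closed case analysis entirely, since it never refers to whether an endpoint is attained. Your $\epsilon$-shrinking argument instead has to unpack the OC comparability condition into endpoint inequalities ($R(x)<L(y)$ when both intervals are closed, $R(x)\le L(y)$ otherwise; strict overlap inequalities when an open endpoint is involved) and then verify that a shift of size $\epsilon<\delta/2$ preserves all of them — which it does, since every nonzero difference of endpoints is at least $\delta$, and the worst case (two open intervals, costing $2\epsilon$) is exactly what your bound is calibrated for. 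The trade-off: the paper's proof is shorter and avoids quantitative bookkeeping, while yours is more explicit and makes it transparent exactly which inequalities characterize comparability in an OC representation — information the paper in fact re-derives later when analyzing unit representations. Two small points worth a sentence in a final write-up: you should note that $\delta$ exists because $X$ is finite and that degenerate open intervals $(a,a)$ are excluded (they are empty, so they cannot occur in a representation of an irreflexive order), and that the shrunken interval $[L(x)+\epsilon,R(x)-\epsilon]$ is nonempty because $R(x)-L(x)\ge\delta>2\epsilon$.
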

    \begin{proof}  By definition, interval orders are OC interval orders.  For the converse,
    let $P= (X,\prec)$ have an OC interval representation $\cal I$.       For each pair of incomparable elements $u,v \in X$, pick a point $x_{uv}$ in the set $I(u) \cap I(v)$.  For each vertex $v \in V$, define $I'(v) = [\min\{x_{uv}: u \parallel v\}, \max\{x_{uv}: u \parallel v\}]. $  One can check that the intervals $I'(v)$ give a closed interval representation of $P$, so $P$ is an interval order.  
    \qed
    \end{proof}

    Proposition~\ref{prop-OC} shows that the class of interval orders remains unchanged even when both open and closed intervals are allowed. However, we will see that  the class of unit   interval orders is enlarged when both open and closed intervals are allowed.

  An interval order is \emph{proper} if it has a closed interval representation in which no interval is  properly contained in  another.  By definition, the class of unit interval orders is contained in the class of proper interval orders, and in fact, the classes are equal, as implied by the work of  \cite{ScSu58} and written explicity in terms of graphs in \cite{Ro69}.  When both open and closed intervals are permitted in a representation, we must refine the notion of \emph{proper} in order to maintain  the inclusion of the unit class in the proper class.
  
 \begin{defn} {\rm  An interval $I(u)$ is \emph{strictly contained} in an interval $I(v)$ if $I(u) \subset I(v)$ and they do not have identical endpoints.
 An OC interval representation ${\cal I}$ is  \emph{strict} if no interval is strictly contained in another, i.e., if
  the only proper inclusions allowed are between intervals with the same endpoints. }
  \end{defn}

We will show in Theorem~\ref{big-thm}
that the classes of unit OC interval orders and strict OC interval orders are equal and give a forbidden poset characterization of this class.

\subsection{Forbidden posets}

  \begin{figure}
\centering
  {\includegraphics[height=1.35in]{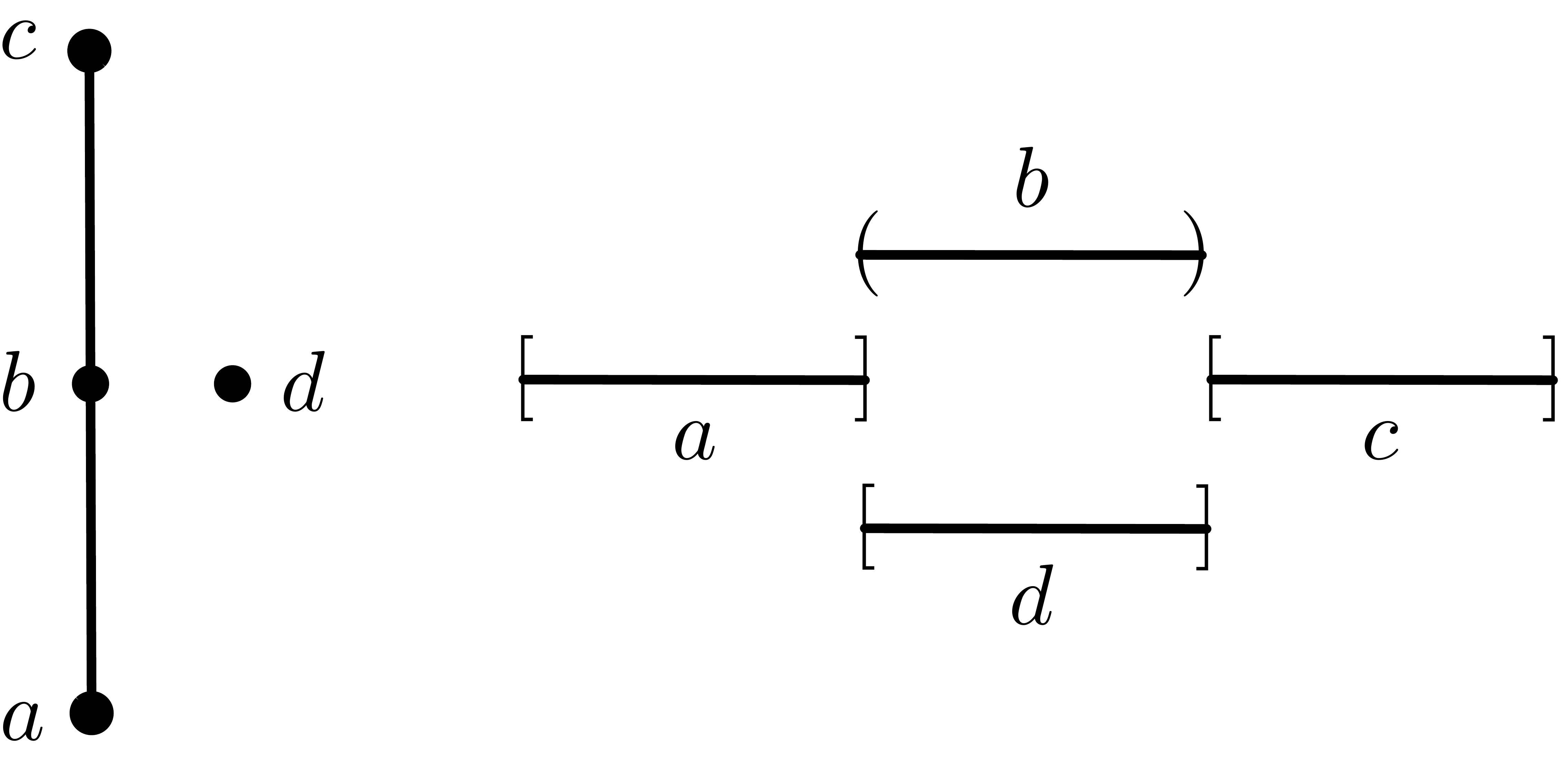}}
  \caption{The order {\threeone}  and its unique unit OC representation.}
  \label{fig-threeone}
\end{figure}

 Two points in a poset are said to be \emph{twins} if they have precisely the same comparabilities.  In an interval representation of a poset, if two points are assigned the same interval then they are twins.  A poset is \emph{twin-free} if no two points are twins.
 
 In this section we describe the set ${\cal F}$ of posets that are forbidden for  twin-free unit OC interval orders.   We begin with the poset {\threeone} with ground set $a,b,c,d$ and comparabilities $a \prec b \prec c$   shown in Figure~\ref{fig-threeone}.  The unique unit OC interval representation of {\threeone} is shown to its right.  In this  and other figures, we shorten the notation by labeling an interval as $x$ instead of $I(x)$.

\begin{figure}
 \begin{picture}(300,100)(0,0)
\thicklines

\put(20,20){\circle*{5}}
\put(20,40){\circle*{5}}
\put(20,60){\circle*{5}}
\put(20,80){\circle*{5}}
\put(40,50){\circle*{5}}
\put(20,20){\line(0,1){60}}
 \put(5,0){\fourone}

\put(80,30){\circle*{5}}
\put(80,50){\circle*{5}}
\put(80,70){\circle*{5}}
\put(95,50){\circle*{5}}
\put(110,50){\circle*{5}}
\put(80,30){\line(0,1){40}}
 \put(60,0){\threeoneone}
 
 \put(150,20){\circle*{5}}
\put(150,40){\circle*{5}}
\put(150,60){\circle*{5}}
\put(150,80){\circle*{5}}
\put(135,65){\circle*{5}}
\put(165,35){\circle*{5}}

\put(150,20){\line(0,1){60}}
\put(135,65){\line(1,1){15}}
\put(150,20){\line(1,1){15}}
 \put(145,0){$Z$}

  \put(220,20){\circle*{5}}
\put(220,80){\circle*{5}}
\put(205,50){\circle*{5}}
\put(235,50){\circle*{5}}
\put(250,50){\circle*{5}}

\put(220,20){\line(1,2){15}}
\put(220,20){\line(-1,2){15}}
\put(205,50){\line(1,2){15}}
\put(235,50){\line(-1,2){15}}
 \put(220,0){$D$}

  \put(300,20){\circle*{5}}
\put(300,50){\circle*{5}}
\put(285,80){\circle*{5}}
\put(315,80){\circle*{5}}
\put(315,50){\circle*{5}}

\put(300,20){\line(0,1){30}}
\put(300,50){\line(1,2){15}}
\put(300,50){\line(-1,2){15}}
 
 \put(295,0){$Y$}

  \end{picture}

\caption{Forbidden posets which, with the dual of $Y$, comprise the set ${\cal F}$.}

\label{fig-forbid}
 \end{figure}
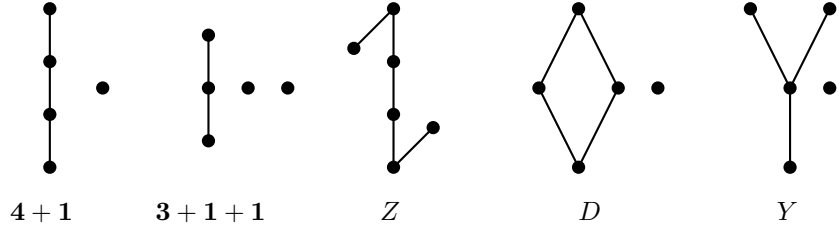
 
In the next proposition, we show that the posets in Figure~\ref{fig-forbid} are forbidden in  twin-free unit OC interval orders.  Each of these posets is self dual, except for $Y$.  Throughout the remainder of this paper we denote the set of posets given in Figure~\ref{fig-forbid} together with the dual of $Y$ by ${\cal F}$.
 
 \begin{prop}
    \label{prop-forbid}
  If $P$ is a twin-free unit OC  interval order then it does not contain any of the five posets in Figure~\ref{fig-forbid}  or the dual of $Y$.
  \end{prop}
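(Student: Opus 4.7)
The plan is to lean on two tools throughout: Proposition~\ref{prop-OC}, which guarantees that $P$ contains no induced \twotwo{}, and the essential uniqueness of the unit OC representation of \threeone{}, illustrated in Figure~\ref{fig-threeone}. I would first verify this uniqueness carefully. Given $a \prec b \prec c$ with $d$ incomparable to all three, a packing argument using $|I(d)| = 1$ and the fact that $I(d)$ must meet both $I(a)$ and $I(c)$ shows the chain is tightly packed, with $R(a) = L(b)$, $R(b) = L(c)$, $L(d) = R(a)$, and $R(d) = R(b)$. The requirement that every interval be fully open or fully closed (never half-open) then forces $I(a)$, $I(c)$, and $I(d)$ closed on both ends and $I(b)$ open on both ends. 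Every subsequent step of the proof is a direct application of this uniqueness.

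For \fourone{} and $Z$ the contradiction is immediate. In \fourone{}, both $\{a_1, a_2, a_3, b\}$ and $\{a_2, a_3, a_4, b\}$ are induced \threeone{}s, and uniqueness forces $I(a_2)$ to be open on both ends (the middle role in the first) and closed on both ends (the leftmost role in the second), which is impossible. The same strategy handles $Z$: the subsets $\{a, b, c, e\}$ and $\{b, c, d, f\}$ are induced \threeone{}s, as one checks using $e \parallel a, b, c$ and $f \parallel b, c, d$ from the Hasse diagram, and $I(b)$ is simultaneously forced open (middle in the first) and closed (leftmost in the second).

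For \threeoneone{}, $D$, and $Y$, I would use the twin-free hypothesis. In \threeoneone{}, both $\{a, b, c, d\}$ and $\{a, b, c, e\}$ are induced \threeone{}s sharing the chain, and each forces its isolated element to equal the closed interval with endpoints matching $I(b)$; hence $I(d) = I(e)$, so $d, e$ are twins in $P$. For the diamond $D$, writing $C_1, C_2$ for its two incomparable midheight elements and $E$ for the isolated element, both $\{A, C_1, B, E\}$ and $\{A, C_2, B, E\}$ are induced \threeone{}s sharing $E$; uniqueness forces $I(C_1) = I(C_2)$ to be the open interval matching $I(E)$, so $C_1, C_2$ are twins in $P$. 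For $Y$, the \threeone{}s $\{u, v, w_1, x\}$ and $\{u, v, w_2, x\}$ share $u, v, x$, and each pins down $I(w_i)$ as the closed interval $[R(v), R(v)+1]$; hence $I(w_1) = I(w_2)$ and $w_1, w_2$ are twins in $P$. The dual of $Y$ follows by reversing the order.

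The only place that needs real care is the uniqueness claim for \threeone{} itself: one must establish the packing constraint and then use the no-half-open restriction to pin down the openness of each interval. Once that is in hand, the rest is routine bookkeeping.
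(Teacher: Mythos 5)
Your proposal is correct and follows essentially the same approach as the paper: each forbidden poset is handled by overlaying two induced copies of \threeone{} and invoking the uniqueness of its unit OC representation, with the twin-free hypothesis used exactly for \threeoneone{}, $D$, and $Y$. The only difference is that you spell out the packing argument establishing the uniqueness of the \threeone{} representation, which the paper asserts via Figure~\ref{fig-threeone} without proof.
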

  \begin{proof}  Suppose the contrary. We consider  each of the posets of $\cal F$ in turn.
 \begin{enumerate}
  \item Consider the poset {\fourone} with ground set $\{a, b, c, d, x\}$ and comparabilities $a \prec b \prec c \prec d$.  The unique representation of the {\threeone} induced by $\{a, b, c, x\}$ forces $I(b)$ to be an open interval, while the unique representation of the {\threeone} induced by $\{b, c,d,x\}$ forces $I(b)$ to be a closed interval, a contradiction.
  \item Consider the poset {\threeoneone} with ground set $\{a,b,c,x,y\}$ and comparabilities $a \prec b \prec c$.  The unique representations of the two {\threeone}'s induced by $\{a,b,c,x\}$ and $\{a,b,c,y\}$ force $I(x) = I(y)$.  Thus, $x, y$ would be twins in any poset containing a {\threeoneone}.
  \item Consider the poset $Z$ with ground set $\{a,b,c,d,x,y\}$ and comparabilities $a \prec b \prec c \prec d, x \prec d, a \prec y.$  The unique representation of the {\threeone} induced by $\{a,b,c,x\}$ forces $I(b)$ to be an open interval, while the unique representation of the {\threeone} induced by $\{b,c,d,y\}$ forces $I(b)$ to be a closed interval, a contradiction.
  \item Consider the poset $D$ with ground set $\{a,b,c,d,x\}$ and comparabilities $a \prec b \prec d, a \prec c \prec d$.  The unique representations of the {\threeone}'s induced by $\{a,b,d,x\}$ and $\{a,c,d,x\}$ force $I(b)$ and $I(c)$ to be identical intervals.  Thus $b$ and $c$ would be twins in any poset containing $D$.
\item Finally consider the poset $Y$ with ground set $\{a,b,c,d,x\}$ and comparabilities $a \prec d \prec b, a \prec d \prec c$.  The two {\threeone}'s induced by $\{a,d,b,x\}$ and $\{a,d,c,x\}$ force $I(b)$ and $I(c)$ to be identical intervals.  Thus, $b$ and $c$ would be twins in any poset containing $Y$.  The argument for the dual is similar.
   \qed
  \end{enumerate}  

  \end{proof}

Observe that in  the proof of Proposition~\ref{prop-forbid},   the \emph{twin-free} hypothesis is used only in the arguments for the posets {\threeoneone}, $D$, and $Y$.  Indeed, these three posets are unit OC interval orders if twins are allowed.  The arguments for the posets {\fourone} and $Z$ did not use the twin-free hypothesis, and indeed these are not unit OC interval orders even when twins are allowed.
 
 \section{Representing interval orders}
 In this section we describe how to obtain an initial closed interval representation for a given interval order and describe several important properties of this representation.
 
 \begin{defn}
 \label{def-downup}
 {\rm Let $P = (X, \prec)$ be a poset.  For any $x \in X$, the \emph{down set of} $x$, denoted by $D(x)$, is the set $\{y \in X : y \prec x\}$.  Similarly, the \emph{up set of} $x$, denoted by $U(x)$ is the set  $\{y \in X : x \prec y\}$.  We let ${\cal D}$ be the set of all down sets of $P$ and ${\cal U}$ be the set of all up sets of $P$. }
 \end{defn}
 
 The following elementary result gives an alternate characterization of interval orders and appears in  \cite{Tr92}.  We omit the proof.
 
 \begin{prop}
 \label{prop-trotter}
 The following are equivalent for a poset $P = (X, \prec)$.
 \begin{enumerate}
 \item $P$ has no induced {\twotwo}.
 \item The down sets of $P$ are ordered by set inclusion.
 \item The up sets of $P$ are ordered by set inclusion.
 \end{enumerate}
 \end{prop}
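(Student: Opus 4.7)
The plan is to establish the cycle $(1) \Rightarrow (2) \Rightarrow (1)$ and then obtain the equivalence with $(3)$ by a duality argument, since $\mathbf{2}+\mathbf{2}$ is self-dual and up sets in $P$ are down sets in the dual poset $P^*$.

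For $(2) \Rightarrow (1)$, I would argue the contrapositive. Suppose $P$ contains an induced $\mathbf{2}+\mathbf{2}$ on vertices $\{a,x,b,y\}$ with $a \prec x$, $b \prec y$, and all other pairs incomparable. Then $a \in D(x)$, but $a \notin D(y)$ because $a \parallel y$; symmetrically $b \in D(y) \setminus D(x)$. Hence $D(x)$ and $D(y)$ are incomparable under inclusion, so $\mathcal{D}$ is not linearly ordered.

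The heart of the proof is $(1) \Rightarrow (2)$, which I would also do contrapositively. Assume $\mathcal{D}$ is not linearly ordered by inclusion: pick $x,y \in X$ and witnesses $a \in D(x) \setminus D(y)$ and $b \in D(y) \setminus D(x)$. The goal is to show $\{a,b,x,y\}$ induces a $\mathbf{2}+\mathbf{2}$, and this is the main (though still routine) obstacle, because one must rule out every stray comparability among the four elements using only transitivity and the hypotheses $a \prec x$, $a \not\prec y$, $b \prec y$, $b \not\prec x$. For example, if $y \prec a$, then $y \prec x$, which together with $b \prec y$ forces $b \prec x$, a contradiction; hence $a \parallel y$. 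The cases $x \prec b$, $a \prec b$ (using $b \prec y$), $b \prec a$ (using $a \prec x$), $x \prec y$ (using $a \prec x$), and $y \prec x$ (using $b \prec y$) are eliminated by exactly the same style of one-step transitivity argument. This leaves only the comparabilities $a \prec x$ and $b \prec y$, which is the desired $\mathbf{2}+\mathbf{2}$.

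Finally, to get $(1) \Leftrightarrow (3)$, I would apply the equivalence $(1) \Leftrightarrow (2)$ to the dual poset $P^* = (X, \succ)$. Down sets in $P^*$ are precisely up sets in $P$, and $\mathbf{2}+\mathbf{2}$ is isomorphic to its own dual, so $P$ has no induced $\mathbf{2}+\mathbf{2}$ iff $P^*$ has no induced $\mathbf{2}+\mathbf{2}$ iff the down sets of $P^*$ are linearly ordered iff $\mathcal{U}$ is linearly ordered. This completes the chain of equivalences.
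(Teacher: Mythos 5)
Your proof is correct. Note that the paper itself omits the proof of this proposition entirely (it cites Trotter's book and states ``We omit the proof''), so there is no argument in the paper to compare against; your write-up is the standard one. The contrapositive in both directions, the one-step transitivity case analysis to show $\{a,b,x,y\}$ induces a \twotwo, and the duality argument for the up sets are all sound. The only point you leave implicit is that $a,b,x,y$ are four distinct elements, but this follows immediately from the same hypotheses (e.g.\ $a=y$ would give $b\prec a\prec x$, contradicting $b\not\prec x$), so the gap is cosmetic.
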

 
Theorem~\ref{thm-green} below is based on work by Greenough \cite{Gr76} and shows how to construct a representation of an interval order from its down sets and up sets.  A statement of this result appears in \cite{KeTr10} and an illustrative example is shown in Figure~\ref{fig-interval-rep}.  We include the proof for completeness.
 
 \begin{theorem}
 \label{thm-green}  
Let $P = (X, \prec)$ be an interval order.  Index the down sets and the up sets of $P$ as follows:  
\[\emptyset = D_1 \subset D_2 \subset \ldots \subset D_{|{\cal D}|}  \mbox{\rm{ and }} U_1 \supset U_2 \supset \ldots \supset  U_{|{\cal U}|} = \emptyset. \]

\noindent For each $x \in X$ let $L(x) = i$ where $D(x) = D_i$, and let $R(x) = j$ where $U(x) = U_j$.  Then the following hold:
\begin{enumerate}
\item $|{\cal D}| = |{\cal U}|$.
\item $L(x) \leq R(x)$ for each $x \in X$.
\item The assignment of the interval $I(x) = [L(x), R(x)]$ to each $x$ in $X$ produces an interval representation of $P$.
\end{enumerate}
 \end{theorem}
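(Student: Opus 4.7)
The plan is to prove the pair of dual identities
$$D_i = \{x \in X : R(x) < i\} \quad \text{and} \quad U_i = \{y \in X : L(y) > i\}$$
by a simultaneous induction on $i$, and to extract all three conclusions from them. That these indexings make sense at all is guaranteed by Proposition~\ref{prop-trotter}: since $P$ is an interval order, both $\mathcal{D}$ and $\mathcal{U}$ are totally ordered by inclusion.

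For the base case $i=1$, the identity $D_1 = \emptyset$ matches $\{x : R(x) < 1\} = \emptyset$, and $U_1$ (the largest up set) equals the set of non-minimal elements, which is exactly $\{y : L(y) > 1\}$. For the inductive step at index $i \geq 2$, I would first establish the $D$-side: given any $x \in D_i \setminus D_{i-1}$, the conditions $x \prec y$ for every $y$ with $L(y) \geq i$ and $x \not\prec y$ for every $y$ with $L(y) < i$ combine to force $U(x) = \{z : L(z) \geq i\}$, which by the inductive hypothesis on the $U$-side at step $i-1$ is exactly $U_{i-1}$; this gives $R(x) = i-1$, from which the first identity at step $i$ follows. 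Then, using the just-established $D_i = \{x : R(x) < i\}$, a dual computation shows that any $z \in U_{i-1} \setminus U_i$ satisfies $D(z) = D_i$ and hence $L(z) = i$, completing the $U$-side at step $i$.

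Granting these identities, all three parts of the theorem follow quickly. Part 3 is immediate: $x \prec y$ iff $x \in D(y) = D_{L(y)} = \{z : R(z) < L(y)\}$ iff $R(x) < L(y)$, which is precisely the defining condition for the intervals $[L(x), R(x)]$ to represent $P$. Part 2 follows from irreflexivity: since $x \notin D(x)$, the identity gives $R(x) \geq L(x)$. For Part 1, if $m > n$ then some element $z$ with $L(z) = m > n$ would contradict $U_n = \emptyset = \{y : L(y) > n\}$, so $m \leq n$; by the symmetric argument $n \leq m$, hence $|\mathcal{D}| = |\mathcal{U}|$. The main obstacle is precisely the coupled nature of the induction: neither identity can be established independently, because the $D$-side at step $i$ invokes the $U$-side at step $i-1$ while the $U$-side at step $i$ invokes the $D$-side just proved at step $i$, so the two chains must be advanced in lockstep throughout.
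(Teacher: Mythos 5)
Your argument is correct, but it is a genuinely different proof from the one in the paper. The paper's proof starts from the hypothesis that a closed interval representation of $P$ already exists and normalizes it geometrically: it sweeps left-to-right and then right-to-left, merging endpoint values until every value that occurs as an endpoint occurs as both a left and a right endpoint, relabels the $m$ surviving values $1,\dots,m$, and then reads off that the down set of $x$ is determined by its left endpoint and the up set by its right endpoint, which yields all three conclusions at once. You instead work entirely order-theoretically: you never invoke a pre-existing representation, only the fact (Proposition~\ref{prop-trotter}) that the down sets and up sets are each linearly ordered by inclusion, and your coupled induction establishing $D_i=\{x: R(x)<i\}$ and $U_i=\{y: L(y)>i\}$ is sound --- I checked that the $D$-step at $i$ correctly consumes the $U$-identity at $i-1$ and the $U$-step at $i$ consumes the $D$-identity at $i$, and that conclusions (2) and (3) drop out formally (irreflexivity for (2), and $x\prec y\iff x\in D_{L(y)}\iff R(x)<L(y)$ for (3)). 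What your route buys is that it simultaneously reproves the Fishburn implication ``no induced $\twotwo$ implies interval order,'' making Theorem~\ref{thm-green} self-contained; what the paper's route buys is an explicit transformation of a given representation into the canonical one. The one thin spot is your treatment of (1): the two identities are not literally symmetric in form (the dual of ``$U_n=\emptyset$ forces $L(y)\le n$ for all $y$'' is the vacuous ``$D_1=\emptyset$ forces $R(x)\ge 1$''), and if $|{\cal D}|\ne|{\cal U}|$ the lockstep induction can only be run to $\min(|{\cal D}|,|{\cal U}|)$ in the first place. This is repairable --- either apply your whole argument to the dual poset, or note that stopping the induction at $\min(|{\cal D}|,|{\cal U}|)$ already yields a contradiction in either direction of strict inequality --- but as written ``by the symmetric argument'' papers over a real asymmetry that you should spell out.
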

 \begin{proof}
 Start with a closed interval representation of $P$.  We will modify this representation so that every value that appears as an endpoint of an interval appears as both a left endpoint of an interval and a right endpoint of an (possibly the same) interval.  Note that some intervals will be reduced to single points by this process.  Suppose there is a real number that appears as a left endpoint in the representation but not as a right endpoint.  Let $z$ be the leftmost of these values.  Let $w$ be the smallest number greater than $z$ for which there is an interval whose right endpoint is $w$.  We replace each interval of the form
 $[z,x]$ 
by $[w,x]$ to produce a new set of intervals that provides another closed representation of $P$. The new representation has one less value that appears as a left endpoint but not a right endpoint.

   \begin{figure}

\centering
 {\includegraphics[height=2.75in]{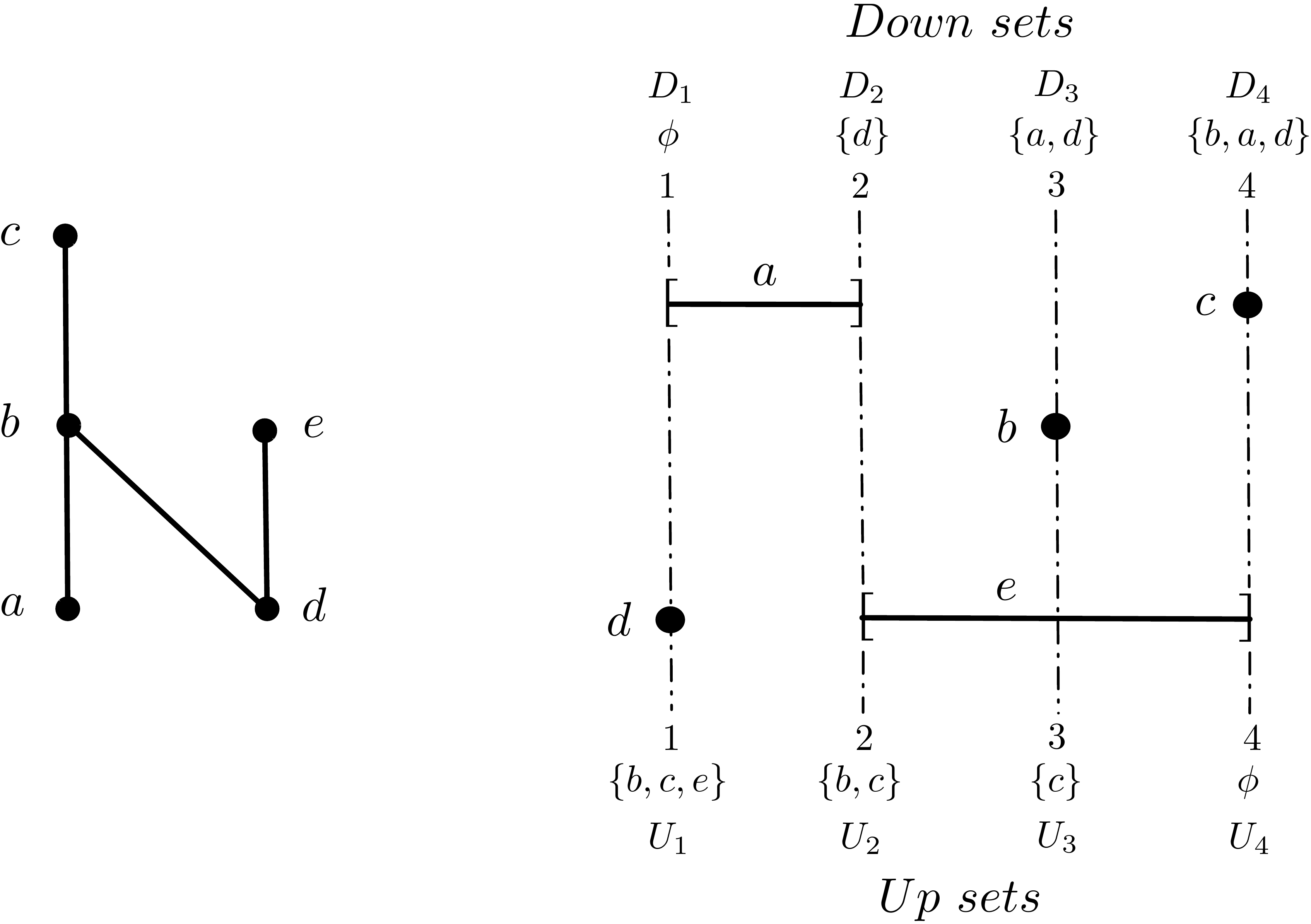}}
  \caption{The poset $N$ and its interval representation produced as in Theorem~\ref{thm-green}.}
   \label{fig-interval-rep}
\end{figure}
 
 Continue sweeping from left to right in this way, modifying the representation until every value that appears as a left endpoint also appears as a right endpoint.  Then sweep from  right to  left and modify the representation further until every value that appears as a right endpoint also appears as a left endpoint.  The result is an interval representation of $P$ in which there are $m$ numbers that appear as endpoints of intervals and each is both a left endpoint of some interval and the right endpoint of some (possibly the same) interval.  Without loss of generality we may assume these $m$ endpoints are labeled $1, 2, \ldots, m$.  Let ${\cal I}$ denote this interval representation.

For each $i$ between 1 and $m$ there exists an $x \in X$ whose left endpoint is $i$ and thus $D(x)$ consists of elements of $X$ (if any) whose intervals lie completely to the left of $i$. Since the endpoints satisfy $1 < 2 < \ldots < m$ and there is a right endpoint at each $i$, these $m$ down sets are distinct and completely ordered by inclusion from smallest to largest.  Thus $D_i = D(x)$ for those $x \in X$ whose left endpoints are labeled $i$.  Furthermore, $|{\cal D}| = m$.  Similarly, $U_i = U(x)$ for those $x \in X$ whose right endpoints are labeled $i$ and $|{\cal U}| = m$.  This proves (1).

Every $x \in X$ whose left endpoint lies on $i$ has $D(x) = D_i$ and thus $L(x) = i$, and every $x \in X$ whose right endpoint lies on $j$ has $U(x) = U_j$ and thus $R(x) = j$.  The intervals in our representation ${\cal I}$ are indeed intervals, thus $L(x) \leq R(x)$ for all $x \in X$, proving (2).

The interval assigned to $x$ in ${\cal I}$ is $[i,j]$ where $D(x) = D_i$ and $U(x) = U_j$, thus the interval is also $[L(x), R(x)]$, proving (3).
 \qed
 \end{proof}
 
 \smallskip
 
Figure~\ref{fig-interval-rep} illustrates applying Theorem~\ref{thm-green}   to  the poset $N$.
 The following remark is a consequence of the proof of Theorem~\ref{thm-green}.
 
 \begin{remark}
 \label{remark-green}
The representation given in Theorem~\ref{thm-green} has the property that  every value that appears as an endpoint of an interval appears as both a left endpoint and a right endpoint. 
 \end{remark}

In  Theorem~\ref{thm-peekers} we will modify the representation given in Theorem~\ref{thm-green} so that each proper inclusion has the useful property that we define in Definition~\ref{def-peekers}.  

\begin{defn}
\label{def-peekers} {\rm Let $P = (X, \prec)$ be an OC interval order and fix an interval representation of it.
For $x,u,v \in X$ with $I(u) \subset I(v)$,
 we say $I(x)$ (or $x$) \emph{peeks into $vu$} if $I(x)$ intersects $I(v)$ but not $I(u)$.  Furthermore, it \emph{peeks into $vu$ from the left} if in addition  $ R(x) \le L(u)$ and \emph{peeks into $vu$ from the right} 
 if $R(u) \le L(x).$
 }
\end{defn}

Figure~\ref{fig-peek}(a) illustrates this definition, where $x$ peeks into $ab$   from the left and $y$ peeks into $ab$ from the right.
 In the following theorem we modify the  representation given in Theorem~\ref{thm-green}  to have distinct endpoints  so that there can be left and right peekers into each proper inclusion.

\begin{theorem}
\label{thm-peekers}
Every twin-free interval order $P = (X, \prec)$ has a closed interval representation satisfying the following {\bf peeking property}:  For each proper inclusion $I(u) \subset I(v)$ there exist $x,y \in X$ so that $x$ peeks into $vu$ from the left and $y$ peeks into $vu$ from the right.
\end{theorem}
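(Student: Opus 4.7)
My plan is to start from the integer-endpoint representation $\mathcal{I}_0 = \{[L_0(x), R_0(x)]\}$ supplied by Theorem~\ref{thm-green} and perturb each endpoint by a small amount chosen so that (i) the result is still a representation of $P$ and (ii) the only proper inclusions in the new representation are those that admit both a left and a right peeker.

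The key structural observation is that in \emph{any} closed representation of $P$, a left peeker $x$ into $vu$ must satisfy $R(x)<L(u)$ and $R(x)\ge L(v)$, hence $x\prec u$ and $x\parallel v$, so $x\in D(u)\setminus D(v)$; symmetrically a right peeker lies in $U(u)\setminus U(v)$. Thus, whenever $I(u)\subsetneq I(v)$ in the desired representation, we need \emph{both} $D(v)\subsetneq D(u)$ and $U(v)\subsetneq U(u)$ strictly. The obstruction in $\mathcal{I}_0$ is that the twin-free hypothesis only rules out equality of both $D$ and $U$ sets, so $I_0(u)\subsetneq I_0(v)$ may still occur when exactly one of the two inclusions is strict; the perturbation must remove those one-sided inclusions.

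I will define the modified endpoints by
\[
L(x)\;=\;L_0(x)+\epsilon R_0(x)-\epsilon(m+1),\qquad R(x)\;=\;R_0(x)+\epsilon L_0(x),
\]
with $\epsilon$ small (say $\epsilon=1/(3m)$); the offset $-\epsilon(m+1)$ on $L$ is included so that intervals of $\mathcal{I}_0$ that merely touched at a common integer remain incomparable after perturbation. Three short calculations then finish the job. \emph{Validity} follows from
\[
L(v)-R(u)\;=\;\bigl(L_0(v)-R_0(u)\bigr)+\epsilon\bigl(R_0(v)-L_0(u)-(m+1)\bigr),
\]
whose $\epsilon$-correction has absolute value at most $2m\epsilon<1$, so the sign is forced by the integer $L_0(v)-R_0(u)$, matching $u\prec v$. \emph{Inclusion analysis}: since
\[
L(u)-L(v)=(L_0(u)-L_0(v))+\epsilon(R_0(u)-R_0(v)),\qquad R(u)-R(v)=(R_0(u)-R_0(v))+\epsilon(L_0(u)-L_0(v)),
\]
a case split on the signs of $L_0(u)-L_0(v)$ and $R_0(u)-R_0(v)$ shows that $I(u)\subsetneq I(v)$ holds in the modified representation iff $L_0(u)>L_0(v)$ \emph{and} $R_0(u)<R_0(v)$, i.e., both $D(v)\subsetneq D(u)$ and $U(v)\subsetneq U(u)$ are strict. \emph{Peekers}: for each such proper inclusion, Remark~\ref{remark-green} provides $x\in X$ with $R_0(x)=L_0(u)-1$ and $y\in X$ with $L_0(y)=R_0(u)+1$; using $L_0(u)\ge L_0(v)+1$ and $R_0(v)\ge R_0(u)+1$, direct estimates show $L(v)\le R(x)<L(u)$ and $L(x)\le R(v)$, so $x$ is a left peeker into $vu$, and symmetrically $y$ is a right peeker.

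The main obstacle is choosing the perturbation to accomplish everything simultaneously: preserve validity, including the delicate touching case (which is exactly why the $-\epsilon(m+1)$ offset is needed); destroy the one-sided proper inclusions (cases $D(u)=D(v)$ or $U(u)=U(v)$); and leave undisturbed the candidate peekers guaranteed by Remark~\ref{remark-green}. Once the formula above is fixed, the remainder is routine bookkeeping of $O(\epsilon)$ corrections against the integer gaps of $\mathcal{I}_0$.
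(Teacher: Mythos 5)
Your proof is correct and follows essentially the same strategy as the paper: start from the canonical integer-endpoint representation of Theorem~\ref{thm-green}, perturb it so that the only surviving proper inclusions are those where both the left and the right endpoints differ strictly (which the twin-free hypothesis guarantees is the only remaining case besides equality), and then extract the left and right peekers from Remark~\ref{remark-green}. The only difference is the perturbation formula --- you use a shear $L\mapsto L+\epsilon R$, $R\mapsto R+\epsilon L$ with an offset, whereas the paper expands each interval $[j,j+k]$ symmetrically by $\tfrac{1}{k+3}$ --- but both serve the identical purpose and your case analysis and $O(\epsilon)$ estimates check out.
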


\begin{proof}
Given a twin-free interval order $P$, use the assignment of intervals in Theorem~\ref{thm-green} to produce an interval representation of $P$.  Observe that each endpoint is an integer.  Let $I(v) = [j, j+k]$ be the interval assigned to $v$ where $j,k \in {\integers}$ and $k \ge 0$.  We modify this representation to produce one with distinct endpoints. 

Let $\hat{I}(v) = \left[ j - \frac{1}{k+3}, \  j + k + \frac{1}{k+3} \right]$ for each $v \in X$.  Since $P$ is twin-free the endpoints in $\{ \hat{I}(v) : v \in X \}$ are distinct by construction.  One can verify that $\hat{I}(v)$ properly contains $\hat{I}(u)$ if and only if  
$I(v), I(u)$ have distinct endpoints and $I(v)$ properly contains $I(u)$.  

Suppose $\hat{I}(v)$ properly contains $\hat{I}(u)$.  Then $I(v)$ properly contains $I(u)$ and they have distinct endpoints.  By Remark~\ref{remark-green} there exist $x,y \in X$ where the right endpoint of $I(x)$ equals the left endpoint of $I(v)$ and the left endpoint of $I(y)$ equals the right endpoint of $I(v)$.  It follows that, in both the original and the modified representations of $P,$ $x$ peeks into $vu$ from left and $y$ peeks into $vu$ from the the right.
\qed
\end{proof}

\section{The Main Theorem}

We are now ready to state and prove our main theorem.

\begin{theorem}
 Let $P = (X, \prec)$ be a twin-free interval order.  The following are equivalent:

\begin{enumerate}[(1)]  
\item $P$ is a unit OC interval order.
  
\item $P$ is a strict OC interval order.
  
\item $P$ has no induced poset from the forbidden set $\cal F$ consisting of the five   orders in Figure~\ref{fig-forbid} and the dual of $Y$.

\end{enumerate}
  
  \label{big-thm}
  \end{theorem}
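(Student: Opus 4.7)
The plan is to prove Theorem~\ref{big-thm} via the cycle of implications $(1) \Rightarrow (2) \Rightarrow (3) \Rightarrow (1)$. The first implication is immediate: in any unit OC representation, two intervals of equal length with one contained in the other must share endpoints, so every unit OC representation is automatically strict. For $(2) \Rightarrow (3)$, the plan is to adapt each case of Proposition~\ref{prop-forbid} from the unit setting to the strict setting. The key observation is that in any OC representation of a {\threeone}, the interval assigned to the isolated element must meet both extremes of the chain and hence contain the middle interval as a point set; in a strict representation, this containment must be by endpoint-sharing. This weaker rigidity is sufficient to rerun each of the five original arguments; for instance in {\fourone}, the two induced {\threeone}'s force $I(x)$ to share endpoints with both $I(b)$ and $I(c)$, contradicting $b \prec c$.

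The main work is $(3) \Rightarrow (1)$. The plan is to begin with the representation given by Theorem~\ref{thm-peekers}, which has distinct endpoints and left/right peekers into every proper inclusion, and transform it into a unit OC representation. Each proper inclusion $I(u) \subsetneq I(v)$ in the initial representation must collapse to an endpoint-sharing pair in the target, with $I(v)$ closed and $I(u)$ open. The forbidden posets control this transformation: the absence of {\fourone} and $Z$ is intended to exclude chains of proper inclusions whose peekers would obstruct a simultaneous shrinking to unit length, while the absence of {\threeoneone}, $D$, $Y$, and its dual ensures that each endpoint-sharing class in the target contains at most two elements, matching the only two open/closed options available at a fixed pair of endpoints in a twin-free poset.

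The construction itself partitions $X$ into endpoint-sharing classes, arranges these classes along the real line in an order consistent with the down-set and up-set chains of $P$, assigns each class a unit interval, and fixes the open/closed status of each element using its role as a peeker in the initial representation. The hardest part will be coordinating these local choices globally: every comparability and every incomparability of $P$ must be realized correctly by the resulting unit intervals, and verifying this will require a careful case analysis on the open/closed statuses of pairs of intervals and on the relative ordering of their endpoints. Each potentially obstructive configuration that arises in this verification should be ruled out by one of the forbidden posets, and completing this verification closes the cycle.
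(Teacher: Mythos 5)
Your cycle $(1)\Rightarrow(2)\Rightarrow(3)\Rightarrow(1)$ differs from the paper's decomposition into $(1)\Rightarrow(3)$, $(3)\Rightarrow(2)$, and $(2)\Rightarrow(1)$, and the first two links of your cycle are sound: $(1)\Rightarrow(2)$ is indeed immediate, and your $(2)\Rightarrow(3)$ correctly exploits the rigidity of \threeone{} in a \emph{strict} representation (the interval of the isolated element must properly contain $I(b)$, hence share endpoints with it), which lets you rerun the five forbidden-poset arguments without first passing to a unit representation. That is a legitimate alternative to the paper's pairing of $(1)\Rightarrow(3)$ with a Bogart--West-based $(2)\Rightarrow(1)$.

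The genuine gap is in $(3)\Rightarrow(1)$, which in your decomposition carries all the weight --- it subsumes both the paper's $(3)\Rightarrow(2)$ and its $(2)\Rightarrow(1)$ --- and which you only describe rather than prove. Two essential pieces are missing. First, you never give the actual placement of the unit intervals: ``arranges these classes along the real line in an order consistent with the down-set and up-set chains'' does not produce coordinates, and producing unit-length intervals that realize exactly the comparabilities of a given order is itself a theorem (Scott--Suppes / Bogart--West). The paper avoids this by first building a \emph{strict}, not unit, representation --- retracting the unique left and right peekers of each proper inclusion and then expanding each inner interval to an open interval meeting its outer interval --- and only afterwards invoking Bogart--West on the quotient by endpoint-sharing. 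Second, the verification that no comparability or incomparability is destroyed is deferred wholesale (``each potentially obstructive configuration \ldots should be ruled out by one of the forbidden posets''); this is precisely where the substance lies. In the paper it occupies Propositions~\ref{claims}, \ref{prop-retract} and \ref{prop-expand}, with concrete case analyses producing \threeoneone, \fourone, $D$, $Y$, its dual, and --- crucially for the expansion step --- $Z$, which arises from two overlapping outer intervals whose inner intervals are disjoint. You neither identify these configurations nor show that the forbidden list covers every failure mode of your construction. (A smaller slip: in the target representation an interval is made open because it is the \emph{inner} interval of a proper inclusion, not because of ``its role as a peeker''; peekers remain closed and are merely retracted.) As written, the $(3)\Rightarrow(1)$ portion is a plan, not a proof.
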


  \begin{proof} The proof that (1) $\Rightarrow (3)$ follows from Proposition~\ref{prop-forbid}. 
We next show that (2) $\Rightarrow (1) $.  

Let $P = (X, \prec)$ be a strict OC interval order and fix a strict OC interval representation ${\cal I}$ of $P$.  Take the closure $\overline{I(v)}= [L(v),R(v)]$ of each interval in this representation and remove duplicates, i.e., say two elements are equivalent if their intervals have the same closure and take one representative from each equivalence class.  
Let $X^\prime \subset X$ be the resulting set of elements.

The intervals $\overline{I(v)}$ for $ v \in X^\prime$ determine a proper representation, $\overline{\cal I}$,  of an interval order $P^\prime$.
Apply the Bogart-West procedure in \cite{ BoWe99} to this proper representation to obtain a unit representation ${\cal I'}$ of $P^\prime$ in which element $v \in X'$
 is assigned interval  $I'(v) = [L'(v),R'(v)]$.  As observed  in \cite{RaSz13}, this construction  satisfies

\begin{equation}
\tag{$\ast$}
R(u) = L(v)  \hbox{ if and only if } R'(u) = L'(v), \hbox{for all } u, v\in X'.
\end{equation}

Now  extend  ${\cal I'}$ to $X$ as follows. For each $v\in X$ whose representative in $P'$ is $w$,   let $I''(v)$ have the same endpoints as $I'(w)$ and let $I''(v)$ be closed if and only if $I(v)$ is closed.   Each interval  in $\{I''(v):v \in X\}$ has unit length, and using $(\ast)$ it is straightforward to show that this set of intervals gives a representation of $P$.  \qed 

\end{proof}

In the remaining subsections, we   prove (3) $\Rightarrow (2)  $  of Theorem~\ref{big-thm}.  We begin by describing certain properties that our initial closed representation will satisfy.  

\subsection{Properties of the initial representation}

 \begin{prop}  
\label{claims}  
Let $P = (X, \prec)$ be a twin-free,   ${\cal F}$-free interval order.  Then there exists   a closed interval representation ${\cal I} = \{I(x) : x \in X \}$  of $P$ satisfying the   following:
 
 \begin{enumerate}[(1)]

\item No interval   strictly contains two other intervals.

\item No interval  is strictly contained in  two other intervals.

\item If $I(u) \subset I(v)$
then there are unique  $x, y \in X$ so that $x$ peeks into $vu$ from the left and $y$ peeks into $vu$ from the right.
\end{enumerate}

\end{prop}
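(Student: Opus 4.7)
The plan is to verify that the closed interval representation $\mathcal{I}$ produced by Theorem~\ref{thm-peekers} already satisfies all three properties, by supposing each one fails and exhibiting an induced member of ${\cal F}$. Properties (1)--(3) are established in turn, with (1) invoked inside the proof of (2).  For property (1), suppose $I(v)$ strictly contains two distinct $I(b_1), I(b_2)$, so $b_1, b_2 \parallel v$.  If $b_1 \prec b_2$, the peeking property gives a left peeker $x$ of $vb_1$ and a right peeker $y$ of $vb_2$, producing the $4$-chain $x \prec b_1 \prec b_2 \prec y$ with $v$ incomparable to all, i.e.\ \fourone.  If instead $b_1 \parallel b_2$, the left peeker $x$ of $vb_1$ built in Theorem~\ref{thm-peekers} has integer right endpoint $L(v)$, strictly less than both $L(b_1)$ and $L(b_2)$, so $x \prec b_1$ and $x \prec b_2$; symmetrically the right peeker $y$ of $vb_1$ satisfies $b_1, b_2 \prec y$, and $\{x, b_1, b_2, y, v\}$ induces $D$.

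For property (2), suppose $I(v)$ is strictly contained in both $I(b_1), I(b_2)$ with $b_1 \ne b_2$; since $I(b_1)\cap I(b_2)\supseteq I(v)\ne\emptyset$ we have $b_1 \parallel b_2$.  By Proposition~\ref{prop-trotter} the families $\{D(b_1), D(b_2)\}$ and $\{U(b_1), U(b_2)\}$ are each totally ordered by inclusion; WLOG $D(b_1) \subseteq D(b_2)$.  In case (a), where $D(b_1) = D(b_2)$ (so $U(b_1)\ne U(b_2)$ by twin-freeness, WLOG $U(b_1) \subsetneq U(b_2)$) or where $D(b_1) \subsetneq D(b_2)$ but $U(b_1) = U(b_2)$, pick $w \in D(v) \setminus D(b_2)$ and $y \in U(v) \setminus U(b_2)$ (both nonempty by the strict-containment hypothesis); a short check using $b_i \parallel v$ shows $w, y \parallel b_1$ and $w, y \parallel b_2$, so the chain $w \prec v \prec y$ together with $b_1, b_2$ induces \threeoneone.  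In case (b), where $D(b_1) \subsetneq D(b_2)$ and $U(b_1) \subsetneq U(b_2)$, pick $w \in D(b_2) \setminus D(b_1)$ and $y \in U(b_2) \setminus U(b_1)$; then $\{w, v, b_2, y, b_1\}$ is a copy of $D$ with $b_1$ the isolated vertex.  In the remaining case (c), $D(b_1) \subsetneq D(b_2)$ and $U(b_2) \subsetneq U(b_1)$: the integer endpoints force $L(b_1) < L(b_2)$ and $R(b_2) < R(b_1)$, so $I(b_2) \subsetneq I(b_1)$ in $\mathcal{I}$, and $I(b_1)$ strictly contains both $I(v)$ and $I(b_2)$; applying property (1) (with $v \parallel b_2$) yields the forbidden $D$.

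For property (3), if $x$ and $x'$ are both left peekers of $vu$, each satisfies $x, x' \prec u$ and $x, x' \parallel v$; let $y$ be any right peeker of $vu$, so $u \prec y$ and $y \parallel v$.  If $x$ and $x'$ are comparable, the resulting $4$-chain with $v$ off to the side gives \fourone; if $x \parallel x'$, the set $\{x, x', u, y, v\}$ induces the dual of $Y$ (central $u$ with two incomparable elements below and one above, plus isolated $v$).  Uniqueness of the right peeker is obtained symmetrically, producing \fourone\ or $Y$.  The main obstacle is bookkeeping the sub-cases in property (2): the various combinations of strict versus equal inclusions among $D(b_1), D(b_2), D(v)$ and $U(b_1), U(b_2), U(v)$ call for \threeoneone\ when some of these sets coincide and $D$ otherwise, and sub-case (c) cannot be closed without first having property (1) in hand.
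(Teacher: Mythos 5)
Your overall strategy (take the representation from Theorem~\ref{thm-peekers} and extract a forbidden poset from each failure) matches the paper's, and your arguments for (1) and (3) are correct, though routed slightly differently: in the nested sub-case of (1) you produce $D$ where the paper produces a $\threeoneone$, and in (3) you handle the possibility that two left peekers are comparable via a $\fourone$ (the paper silently assumes they are incomparable, which in fact follows from (1)). For (2), however, you take a genuinely different and more elaborate route through down sets and up sets, and this is where there is a real error. In the representation of Theorem~\ref{thm-green}, $R(x)=j$ where $U(x)=U_j$ and $U_1\supset U_2\supset\cdots$, so a \emph{smaller} up set corresponds to a \emph{larger} right endpoint. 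Hence in your case (c), where $D(b_1)\subsetneq D(b_2)$ and $U(b_2)\subsetneq U(b_1)$, the endpoints satisfy $L(b_1)<L(b_2)$ and $R(b_1)<R(b_2)$: the intervals $I(b_1),I(b_2)$ \emph{overlap without nesting} (e.g.\ $I(b_1)=[1,5]$, $I(b_2)=[2,6]$, $I(v)=[3,4]$). Your claim that $I(b_2)\subsetneq I(b_1)$ is false there, so the reduction to property (1) does not apply, and case (c) — which is in fact the generic configuration the paper's own proof of (2) addresses head-on — is left unproved. (It is your case (b), with $U(b_1)\subsetneq U(b_2)$, that actually forces nesting; the element-picking argument you give for (b) happens to be correct anyway, since it never uses the geometry.)

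The gap is repairable within your framework: in case (c) one has $D(b_2)\subsetneq D(v)$ and $U(b_1)\subsetneq U(v)$, so choosing $w\in D(v)\setminus D(b_2)$ and $y\in U(v)\setminus U(b_1)$ gives a chain $w\prec v\prec y$ with both $b_1$ and $b_2$ incomparable to all three, i.e.\ a $\threeoneone$ — which is essentially the paper's one-line argument (retract to the left peeker into $b_2v$ and the right peeker into $b_1v$). As written, though, your proof of (2) does not close.
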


\begin{proof} Use the construction in the proof of 
  Theorem~\ref{thm-peekers} to form   a closed interval representation ${\cal I} = \{I(x) : x \in X \}$  of $P$ satisfying the peeking property.  We show this representation also satisfies (1), (2) and (3).

{\bf Proof of (1): }  Suppose there exist elements 
  $u\ne v$ of $X$ whose intervals are strictly contained in $I(w)$.  
 We will show that every possible configuration of these intervals leads to a contradiction. 
 
 First consider the case of strictly nested intervals:  $I(u) \subset I(v) \subset I(w)$.  Since ${\cal I}$ satisfies the peeking property, there exist elements  $x,y$ such that $x$ peeks into $vu$ from the left, and $y$ peeks into $vu$ from the right.  In this case,  $x \prec u \prec y$  and $x, u, y, v, w$ induce a {\threeoneone} in $P$, a contradiction since $P$ is ${\cal F}$-free.  
 
 Next suppose that $u$ and $v$ are not nested.  We may assume without loss of generality that $I(u)$ has the leftmost left endpoint and $I(v) $ has the rightmost right endpoint of all the intervals  strictly contained in $I(w)$. 
 
 There exist elements $x,y$ such that $x$ peeks into $wu$ on the left and  $y$  peeks into $wv$ on the right.  If $I(u) \cap I(v) = \emptyset$ then   $x \prec u \prec v \prec y$ and $w,x,u,v,y$ induce a {\fourone} in $P$, a contradiction.  Hence $I(u) $ and $ I(v)$ intersect and the elements $w,x,u,v,y$ induce the forbidden poset $D$ in $P$, also a contradiction. 

\smallskip

{\bf Proof of (2):} Suppose there exist $u,v,w$ so that $I(u)$ is strictly contained in both $I(v)$ and $I(w)$.  Without loss of generality 
 we may assume that $L(w) < L(v) < L(u)$.  By (1), no three intervals can be nested so $R(w) < R(v)$.  Then there exist elements $x,y$ where $x$ peeks into $vu$ from the left and $y$ peeks into $wu$ from the right.  Now the elements $w,v,x,u,y$ induce a {\threeoneone} in $P$,  a contradiction.
 
 \smallskip

{\bf Proof of (3):} 
Since $\cal I$ satisfies the peeking property, there exist elements $x,y$ where $x$
peeks into $vu$ from the left and  $y$ peeks into $vu$ from the right. Suppose there exists a second $x' \in X$ that peeks into $vu$ from the left.  Then $u,v,x,x^\prime,y$  induce  the dual of $Y$  in $P$, a contradiction.     A similar argument yields forbidden configuration $Y$ when two intervals peek into $vu$ from the right.  \qed

\end{proof}

  \subsection{Completing the proof of Theorem~\ref{big-thm}}

In this section we complete the proof of Theorem~\ref{big-thm} by proving $(3) \Rightarrow (2)$.  
Let $P = (X,\prec)$ be an interval order with closed interval representation ${\cal I} = \{I(z) : z \in X\}$.  Suppose $I(u)$ is properly contained in $I(v)$.  We say that $I(u)$ is the \emph{inner} interval of this proper inclusion and $I(v)$ is the \emph{outer interval.}  If $x$ peeks into $vu$ from the left and we redefine $I(x)$ to be $\hat I(x) = [L(x), L(v)]$, we say that $I(x)$ (or $x$) is \emph{retracted to the left}. Similarly,  if $y$ peeks into $vu$ from the right and we redefine $I(y)$ to be $\hat I(y) = [R(v), R(y)]$, we say that $y$ is \emph{retracted to the right}.  Finally, we say that the inner interval $I(u)$ is \emph{expanded (to meet $v$)} if it is redefined to be $\hat I(u) = (L(v), R(v))$.  Figure~\ref{fig-peek}
illustrates these concepts.  
We use these retractions and expansions to convert $\cal I$ into a representation with no strict inclusions.  
In this section, we denote the left and right endpoints of interval $\hat I(v)$ respectively by $\hat L(v)$ and $\hat R(v)$.

\begin{figure}
  \begin{center}
  {\includegraphics[height=1in]{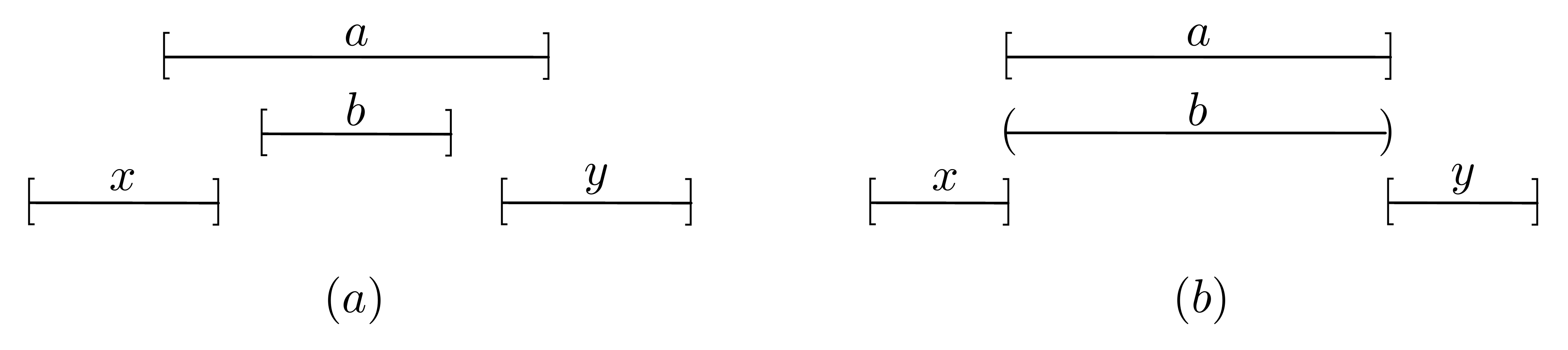}}
  \end{center}
   \caption{(a) $x$ peeks into $ab$   from the left and $y$ peeks into $ab$ from the right. (b)  $x$ is retracted to the left, $y$ is retracted to the right and   $b$ is expanded to meet $a$. }
   \label{fig-peek}

\end{figure}
  
  \begin{prop} Let $P =  (X, \prec)$ be a twin-free, ${\cal F}$-free interval order with closed interval representation  ${\cal I} = \{I(z) : z \in X\}$ satisfying the three conclusions of Proposition~\ref{claims}.   For any proper inclusion,   if the left peeker is retracted  to the left and the right peeker is retracted to the right, then the  resulting set of closed intervals is  also an interval representation of $P$ and  the new representation also satisfies the three conclusions of Proposition~\ref{claims} and has the same proper inclusions as $\cal I$.

  \label{prop-retract}
  \end{prop}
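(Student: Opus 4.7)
My plan is to analyze the retraction of the left peeker $x$ in detail; the argument for the right peeker $y$ is symmetric, and since $\hat I(x)$ lies weakly to the left of $L(u)$ while $\hat I(y)$ lies weakly to the right of $R(u)$, the two retractions act on disjoint portions of the real line and may be performed independently. First I verify that $\hat I(x) = [L(x), L(v)]$ is a well-defined closed interval contained in $I(x)$: the inequality $L(v) \leq R(x)$ holds because $I(x) \cap I(v) \neq \emptyset$, and $L(x) < L(v)$ holds because otherwise $I(x) \subsetneq I(v)$, which together with $I(u) \subsetneq I(v)$ would place two distinct intervals strictly inside $I(v)$, contradicting conclusion~(1) of Proposition~\ref{claims}.

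The heart of the proof, and the main obstacle, is to show that retraction preserves the intersection structure: $I(z) \cap \hat I(x) \neq \emptyset$ if and only if $I(z) \cap I(x) \neq \emptyset$ for every $z \in X$. The forward direction is immediate since $\hat I(x) \subseteq I(x)$. For the converse I consider a hypothetical $z$ whose intersection with $I(x)$ lies entirely in the removed segment $(L(v), R(x)]$; this forces $L(v) < L(z) \leq R(x) < L(u)$ by the distinct-endpoints property inherited from Theorem~\ref{thm-peekers}. A case split on $R(z)$ versus $R(v)$ then delivers a contradiction: if $R(z) < R(v)$ then $I(z) \subsetneq I(v)$ strictly, and conclusion~(1) is violated by $I(u)$ and $I(z)$ both sitting strictly inside $I(v)$; if $R(z) > R(v) \geq R(u)$ then $I(u) \subsetneq I(z)$ strictly, and conclusion~(2) is violated by $I(u)$ being strictly contained in both $I(v)$ and $I(z)$. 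Hence no such $z$ exists, intersections are preserved, and the new set of intervals still represents $P$.

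It remains to verify that the set of proper inclusions is unchanged and that the three conclusions of Proposition~\ref{claims} carry over. Inclusions not involving $x$ or $y$ are literally unchanged. For $x$: the implication $I(x) \subsetneq I(w) \Rightarrow \hat I(x) \subsetneq I(w)$ is immediate; conversely, if $\hat I(x) \subsetneq I(w)$ fails to lift, then $L(w) \leq L(x) < L(v) \leq R(w) < R(x) < L(u)$, which makes $w$ a second left peeker of $vu$ and contradicts conclusion~(3). Similarly, $I(z) \subsetneq I(x)$ strictly either yields $z = x$ by peeker uniqueness or forces $R(z) < L(v)$, in which case $I(z) \subsetneq \hat I(x)$. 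Conclusions~(1) and~(2) of Proposition~\ref{claims} for the new representation are then immediate corollaries of those for the original, and conclusion~(3) follows from the invariance of intersections together with the observation that retraction only weakly shrinks the right endpoint of the left peeker (and weakly expands the left endpoint of the right peeker), so the endpoint inequalities in the definition of peeker survive and no new peekers are created.
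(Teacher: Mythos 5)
Your proof is correct and follows essentially the same route as the paper's: the only possible change is a new comparability involving the retracted peeker, the dichotomy on $R(z)$ versus $R(v)$ contradicts conclusions (1) or (2) of Proposition~\ref{claims}, and any created or destroyed inclusion would produce a second left peeker into $vu$, contradicting conclusion (3). Your added checks (well-definedness of $\hat I(x)$ and the independence of the two retractions) are sound refinements of details the paper leaves implicit.
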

  \begin{proof}    
  Let  $I(u) \subset I(v)$ be a proper inclusion in $\cal I$.  By our hypothesis, there exists a unique $x \in X$ for which $I(x)$
    peeks into $vu$ from the left, and thus  $L(v) \le R(x) < L(u)$.   We retract $x$ to the left and define a new set of intervals ${\hat{\cal I}}$ as follows:  $\hat I(x) = [L(x), L(v)]$ and $\hat I(w) = I(w)$  for $w \neq x$.  Let 
    $\hat P = (X, \hat \prec)$ be the interval order represented by ${\hat{\cal I}}$. 
   We will prove that the orders $P$ and $\hat P$ are equal by showing that any two elements $a,b \in X$ have the same relation in   $\hat P$ as in $P$.
   
   Since retracting $I(x)$ cannot create a new incomparability, we need only consider the case in which $a \parallel b$ in $P$ but $a \  \hat \prec \ b$ in $\hat P$.  Since $a \parallel b$ in $P$, we know $L(b) \le R(a)$, and since the only change made in defining $\hat{\cal I}$ is to $R(x)$, we must have $a = x$. Then 
$$L(v) = \hat R(x) = \hat R(a) < \hat L(b) = L(b)  \leq R(a)  < L(u) < R(v).$$
   
If $R(b) \leq R(v)$ then $I(v)$ properly contains both $I(u)$ and $I(b)$, contradicting  conclusion (1) of Proposition~\ref{claims}.  If $R(b) > R(v)$ then $I(u)$ is properly contained in both $I(v)$ and $I(b)$, contradicting conclusion (2) of  Proposition~\ref{claims}.  We conclude that $a\ \hat \parallel \ b$ and thus that $P = \hat P$.

Next we show that no   proper inclusions were created or lost  in retracting $x$ to the left.   If a proper inclusion were created or lost, there would exist $a \in X$ so that $I(x) \not\subset I(a)$ and $\hat I(x)  \subset 
\hat I(a)$ (or $I(a)  \subset I(x)$ and $\hat I(a)  \not\subset 
\hat I(x)$).  In either case, both $a$ and $x$ peek into $vu$ from the left in $P$, contradicting conclusion (3) of Proposition~\ref{claims}.

After retraction, $x$ continues to peek into $vu$ from the left and so $\hat P = P$. Thus the new representation $\hat {\cal I}$ continues to satisfy the three conclusions of Proposition 13.
Finally, note that the argument is similar if $y$ peeks into $vu$ from the right. \qed
  \end{proof}

 \begin{prop} Let $P =  (X, \prec)$ be a twin-free  interval order  with no induced $Z$ and let ${\cal I} = \{I(z) : z \in X\}$ be a closed interval representation of it  satisfying the three conclusions of Proposition~\ref{claims}, where all peekers  have been retracted in repeated applications of Proposition~\ref{prop-retract}.   If for each proper inclusion, the inner interval is expanded to meet the outer interval, then the resulting set of intervals $\hat {\cal I}$ is a strict OC interval representation of $P$.
 \label{prop-expand}
 \end{prop}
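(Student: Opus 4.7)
The plan is to show $\hat{\mathcal{I}}$ (i) represents $P$ and (ii) is strict.

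For (ii), Proposition~\ref{claims}(1) and~(2) imply that the proper inclusions in $\mathcal{I}$ form a matching of inner-outer pairs. After expansion, for each matched pair $(u,v)$ the inner $\hat I(u) = (L(v), R(v))$ and outer $\hat I(v) = [L(v), R(v)]$ share endpoints, so the inclusion between them is not strict in the sense of the definition. No other strict inclusion $\hat I(a) \subsetneq \hat I(b)$ with distinct endpoints can arise: taking closures would yield a strict inclusion in $\mathcal{I}$ other than a matched pair, violating Proposition~\ref{claims}(1) or~(2) (using that distinct elements have distinct closure endpoints, since original endpoints are distinct and retraction only creates coincidences between a peeker's retracted endpoint and its partner outer's endpoint).

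For (i), I verify, for each pair $a, b \in X$, that they have the same relation in $\hat{\mathcal{I}}$ as in $\mathcal{I}$. The case analysis splits by whether $a$ or $b$ is an inner interval. When neither is inner, intervals are unchanged and there is nothing to check. When exactly one is inner, say $a = u$ of pair $(u,v)$: for $b = v$ the intervals overlap on $(L(v), R(v))$, preserving $u \parallel v$; for $b$ the unique left peeker $x$ of $(u,v)$, retraction has already placed $\hat R(x) = L(v)$, which is excluded from the open $\hat I(u)$, so the intervals separate and $x \prec u$ is preserved (symmetric for the right peeker); for all other $b$, if $u \prec b$ in $\mathcal{I}$ then $R(v) < L(b)$ — otherwise $b$ would qualify as a right peeker of $(u,v)$, contradicting the uniqueness in Proposition~\ref{claims}(3) — and the case $b \parallel u$ follows immediately from $I(u) \subseteq \hat I(u) \subseteq I(v)$.

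The main obstacle is when both $a = u$ and $b = u'$ are inner intervals of distinct matched pairs $(u,v)$ and $(u',v')$. If $u \parallel u'$ in $\mathcal{I}$, the bounds $L(v) < L(u) \le R(u') < R(v')$ and $L(v') < L(u') \le R(u) < R(v)$ yield $\hat I(u) \cap \hat I(u') \ne \emptyset$. The delicate case is $u \prec u'$, where I must establish $R(v) \le L(v')$ so that $\hat I(u)$ remains entirely below $\hat I(u')$. Suppose not: then $I(v)$ and $I(v')$ overlap, and by Proposition~\ref{claims}(1) they cannot be nested. The orientation $L(v') < L(v)$ is ruled out because the left peeker of $(u,v)$ would then also peek into $(u',v')$ from the left, and uniqueness in Proposition~\ref{claims}(3) would force the coincidence of two left peekers' retracted right endpoints $L(v) = L(v')$, impossible since these are distinct original endpoints. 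For the remaining orientation $L(v) < L(v') \le R(v) < R(v')$, $Z$-freeness supplies the contradiction: using Proposition~\ref{claims}(2) to show $R(u') > R(v)$, one exhibits an induced copy of $Z$ on a six-element subset drawn from $\{u, u', v, v', x, y, x', y'\}$ (where $x, x', y, y'$ are the four peekers of the two pairs), with a 4-chain through two pair elements and two peekers as the spine and $v, v'$ filling the two extra roles. A short sub-case analysis on the positions of $L(u')$ relative to $R(v)$ (determining which peekers form the spine and whether one passes to a symmetric construction using the left peekers) confirms the incomparabilities required for $Z$, completing the contradiction.
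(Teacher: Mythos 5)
Your proposal is correct and follows essentially the same route as the paper: a case analysis on whether one or both elements are expanded inner intervals, with the one-inner case settled by the retracted peeker's endpoint meeting the now-open inner interval, and the two-inner case reduced to the overlapping configuration $L(v)<L(v')<R(v)<R(v')$ from which $Z$-freeness gives the contradiction via exactly the six-element configuration (two inner elements, their two outer intervals, and two peekers) that the paper exhibits explicitly. The only difference is that you also spell out the strictness verification, which the paper leaves implicit, while leaving the final incomparability check for the induced $Z$ as a sketch; that check does go through as described.
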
 
  
 \begin{proof}
 Let $\hat P = (X, \hat \prec)$ be the OC interval order represented by ${\hat{\cal I}}$.    We will prove that the orders $P$ and $\hat P$ are equal by showing that any two elements $a,b \in X$ have the same relation in $\hat P$ as in $P$.  For a contradiction, suppose there exist $a,b \in X$ with one relation in $P$ and a different relation in $\hat P$.  Since expanding intervals cannot  cause a new comparability, it must be the case that $a$ and $b$ are comparable in $P$ but incomparable in $\hat P$.    Without loss of generality, we may assume $a \prec b$ in $P$.    There are two cases to consider depending on whether one or both  of $I(a)$, $I(b)$ are inner intervals.
 
 \smallskip
\emph{Case 1}. Only one  of $I(a)$, $I(b)$ is an inner interval.  By symmetry, we may assume it is  $I(b)$. Then $I(b) \subset I(w)$ for some $w \in X$ and $b$ is expanded so that  $\hat{\cal I}(b) = (L(w), R(w))$.

 Then $L(w) = \hat L(b) \le \hat R(a) = R(a)$.
 Thus in $\cal I$, $a$ is the unique element of $X$ that peeks into $wb$ from the left and so, by Proposition~\ref{prop-retract},  $I(a)$ was retracted to the left and $R(a) = L(w)  $.
 But since $\hat I(a) = I(a)$ is closed and $\hat I(b)$ is open, this implies that $a\  \hat \prec \  b$, a contradiction. 
  
 
 \smallskip
 \emph{Case 2}. Both $I(a), I(b)$ are inner intervals. Thus there are $v,w \in X$ such that $I(a) \subset I(v)$ and $I(b) \subset I(w)$, and both $a,b$ are expanded  to open intervals with the same endpoints as $I(v), I(w)$ respectively.   Since  $a\  \hat \parallel \ b$ we have $L(w) = \hat L(b) < \hat R(a) = R(v)$.  If $L(w) \le L(v)$ or $R(v) \ge R(w)$, the intervals for $a,v,w$ or $b,w,v$ would be nested in $\hat {\cal I}$, contradicting Proposition~\ref{claims}.  Thus 
   $L(v) < L(w) < R(v) < R(w)$.  
    
 If $R(v) < L(b)$ then $v$ peeks into $wb$ from the left and $v$ would have been retracted in Proposition~\ref{prop-retract}, making $R(v) = L(w)$, a contradiction. So $R(v) \ge L(b)$, and similarly $L(w) \le R(a)$.  By Proposition~\ref{claims}, there are unique elements $x,y \in X$, where $x$ peeks into $va$ from the left and   $y$ peeks into $wb$ on the right. Then the elements $x,a,b,y,v,w$ 
 induce the forbidden graph $Z$ of Figure~\ref{fig-forbid} in $P$, contrary to our hypothesis.
 $\qed$

 \end{proof}
 
 \bigskip
 
The proof that  (3) implies (2)   in Theorem~\ref{big-thm} now follows from   Propositions~\ref{claims}, \ref{prop-retract}, and~\ref{prop-expand}.

\section{Recognition and realization algorithm}
In this section we present an efficient algorithm for recognizing the class of strict OC interval orders.  Given a twin-free  poset $P=(X, \prec)$ as input, the algorithm returns a strict OC  interval representation in the case that $P$ belongs to the class.  Otherwise, it returns a poset from the set ${\cal F}$ induced in $P$ or indicates that the poset is not an interval order.  

The algorithm proceeds in three stages and,  if the algorithm has not terminated, the output of one stage is the input to the next.  In Stage 1, either a $\twotwo$ is discovered in $P$ and the algorithm terminates, or a closed interval representation $\cal I$  of $P$ is constructed that has distinct endpoints and satisfies the peeking property of Theorem~\ref{thm-peekers}.   In addition, a storage matrix $A$ is created and an inclusion matrix $B$ is initialized.  

In Stage 2, either a forbidden poset  in the set \{$\fourone$, \  $\threeoneone$,   $Y$,   dual of $Y$,   $D$\}
is discovered in $P$ and the algorithm terminates, or otherwise we can conclude that   $\cal I$ also satisfies the three conclusions of Proposition~\ref{claims}.  Matrix $B$ is updated so that it records all strict inclusions of intervals in $\cal I$.  
In Stage 3, either a forbidden poset $Z$ is discovered in $P$ and the algorithm terminates, or otherwise the representation $\cal I$ is modified to be a unit OC interval representation of $P$.

\medskip

\noindent
{\bf Algorithm:  OC Interval Order:}

\smallskip
\noindent
{\bf Input:}  A twin-free poset   $P=(X, \prec)$. 

\smallskip
\noindent
{\bf Output:}   Either returns a $\twotwo$, or a poset from the set $\cal F$ induced in $P$, or  constructs  a strict OC interval representation of $P$.

We begin with  Stage 1,   which     constructs  a closed interval representation when given an interval order and initializes the necessary data structures.

\smallskip

\noindent {\bf Stage 1: Find Initial Representation.}

\smallskip


Calculate the down sets of $P$ and order them $D_1, D_2, D_3, \ldots, D_k$ so that $|D_1| \le |D_2| \le |D_3| \le \cdots \le  |D_k|$.  Check whether $D_i$ is strictly contained in $D_{i+1}$, for $1 \le i \le k-1$.    If there is an $i$ for which $D_i \not\subset D_{i+1}$ then $P$ contains an induced $\twotwo$ which can be found as follows.  Find $x,y $ with $x \in D_i \backslash D_{i+1}$ and $y \in D_{i+1} \backslash D_{i}$, and $a,b$ for which $D_i = D(a)$ and $D_{i+1} = D(b)$.  Then the elements $x,y,a,b$ induce a $\twotwo$ in $P$, and thus $P$ is not an OC interval order by Proposition~\ref{prop-OC}.  \emph{Return these four elements and terminate.  }

Otherwise, by Propositions~\ref{prop-OC} and \ref{prop-trotter}, $P$ is an interval order,  there are  exactly $k$ up sets, and they can be ordered by set inclusion.  Calculate the up sets of $P$ and order them $U_1, U_2, U_3, \ldots , U_k$  by size so that $U_1 \supset U_2 \supset U_3 \supset \cdots \supset U_k$.
Use the intervals given in Theorem~\ref{thm-green}(3) to get an initial interval representation of $P$.  Modify these intervals as in the proof of Theorem~\ref{thm-peekers} to obtain a closed interval representation ${\cal I} = \{I(x) : x \in X\}$ of $P$ that has distinct endpoints and satisfies the peeking  property of Theorem~\ref{thm-peekers}. 

 Next, create the storage matrix $A$.  Sort the $2|X|$ interval endpoints of the representation $\cal I$ from smallest to largest.   The \emph{index} of an endpoint is its position on this list, thus each element $x \in X$ is assigned two indices, those of its left and right endpoints.  
 Record information about this representation in a matrix $A$ that has three rows and $2|X|$ columns, where information about the endpoint $e_j$  with index $j$ is stored in column $j$:
  
  \smallskip
 
\noindent
 \indent $A_{1j} = x $, where $x$ is the   element  of $  X$  that is assigned  index $j$; \\
 \indent    $A_{2j} = L $  if $e_j$ is a left endpoint of $I(x)$ and  $A_{2j} = R $ otherwise; \\
   \indent   $A_{3j}$ is the index of the other endpoint of $I(x)$.  

\noindent
\begin{table}
\begin{center}
\begin{tabular}{|c||c|c|c|c|c|c|c|c|c|c|}  \hline
Index & 1 & 2 & 3 & 4 & 5 & 6 & 7 & 8 & 9 & 10 \\ \hline\hline
 Element &d &a &d &e &a &b &b &c &e &c  \\ \hline
 L/R &L &L &R &L &R &L &R &L &R &R  \\ \hline
 Other end & 3& 5& 1& 9& 2& 7& 6& 10& 4& 8 \\ \hline
\end{tabular}
\end{center}
\caption{The storage matrix $A$ for the poset $N$ in Figure~\ref{fig-interval-rep}.}
\label{matrix-A-table}
\end{table}

Table~\ref{matrix-A-table} shows the matrix $A$ for the poset $N$ of Figure~\ref{fig-interval-rep}.  The entries in  $A$ will not change as the algorithm proceeds.
In addition,   create a matrix $B$ with three rows and $|X|$ columns, one column for each $x \in X$.  The first row of   $B$  provides the  indices corresponding to each $x \in X$.  Fill in these  
    entries using information in array $A$ as follows:

\smallskip    
 \indent  $B_{1x} = \  <i,j>$,  where $i$ is the index of $L(x)$ and $j$ is the index of $R(x)$. 
 \smallskip
      
    The  entries  in the remaining two rows will later record the sets of elements whose intervals contain $I(x)$ and are contained in $I(x)$.  These entries are initialized to be empty.  This ends Stage 1.  Return the interval representation $\cal I$ and the matrices $A$ and $B$.

\smallskip
    \noindent
{\bf Output of Stage 1:}   A $\twotwo$ induced in $P$ or a closed interval representation $\cal I$ of $P$ that has distinct endpoints and satisfies the peeking  property of Theorem~\ref{thm-peekers}, 
 and matrices $A$ and $B$.

    \smallskip

During Stage 2, we record proper inclusions of intervals in $\cal I$  in matrix $B$ so that  by the end of Stage 2, the entries satisfy the following.

\smallskip
$B_{2x} = \{y \in X: I(x) \subseteq I(y\}$ (i.e.,   elements whose intervals contain $I(x)$) \\
\indent $B_{3x} = \{z \in X: I(z) \subseteq I(x)\}$ (i.e., elements whose intervals  $I(x)$ contains) \\ 

 If our representation fails to satisfy one of the three conclusions of Proposition~\ref{claims},
this will be discovered in Stage 2  and a forbidden graph from $\cal F$ produced.  We may need to find up to two left peekers into a proper inclusion and   this is achieved by the   subroutine  \emph{Locate Peekers},  which appears just after  Stage 2.  Finding right peekers is analogous.

\medskip

 \noindent {\bf Stage 2:  Identify inclusions and peekers.}


   Initialize an empty queue $Q$.   For $ j = 1$ to $2|X|$ perform the following operations: 

\medskip
 
 If $A_{2j} = L$, add index $j$ onto the back of $Q$. Increment $j$.

\medskip

  Otherwise,  index $j$ represents the right endpoint of an interval $I(x)$  and the index $i$ of $L(x)$ is currently on $Q$.  Using matrices $A$ and $B$, scan from the front of $Q$ to find $i$. One of three things can occur.

\begin{enumerate}
\item  Index  $i$ is at the front of the 	queue.  

In this case,   remove $i$ from the queue and increment $j$.  No updates are made to matrix $B$.

\item  At least two indices are on the queue in front of $i$.    In this case,     find the forbidden poset $\threeoneone$ in $P$   as follows and terminate.

Let $k_1,k_2 \in Q$ with $k_1 < k_2 < i$ and let $v_1,v_2$ be the elements of $X$ whose left endpoints have   indices $k_1,k_2$ respectively.  Let $k_3$ be the index of the right endpoint of $I(v_1)$ and $k_4$ be the index of the right endpoint  of $I(v_2)$.   
 Then $I(x)$ is contained in both $I(v_1) $ and $I(v_2)$ and $L(v_1) < L(v_2)$.  Locate a left peeker $u_1$ into $v_2x$.  Locate a right peeker $u_2$ into    $v_1x$ (if $k_3 < k_4$) or into $v_2x$ (if $k_4 < k_3$).  The elements $u_1,x,u_2,v_1,v_2$ induce a  $\threeoneone$ in $P$.  \emph{Return these elements and terminate.}  If  for all $j$, the algorithm does not terminate in this case, then the representation satisfies conclusion (2) of Proposition~\ref{claims}.

\item  Exactly one index is on the queue in front of $i$.  

Let $k$ be the index in front of $i$ on the queue and let $w \in X$ have index $k$.  Update $B_{2x}$ to include $w$ and update 
$B_{3w}$ to include $x$.  

If $|B_{3w}| \ge 2$, we find  a forbidden poset  in $P$ as follows and terminate.  Let $y \in B_{3w}$ with $y \neq x$.  Since the algorithm scans endpoints from left to right, we know $L(y) < L(x)$ and since we are in case 3, we know $I(x) \not\subseteq I(y)$.  Locate a left peeker $u_1$ into $wy$ and a right peeker $u_2$ into $wx$.   The elements $u_1,y,x,u_2, w$ induce a forbidden poset in $P$:  either a $\fourone$ 
if $I(x), I(y)$ do not intersect
or the poset $D$  
if they do.
\emph{Return these   elements and terminate.}  If  for all $j$, the algorithm does not terminate in this case, then the representation satisfies conclusion (1) of Proposition~\ref{claims}.

Otherwise, $|B_{3w}| = 1$.  If there are two right peekers into $wx$ (or two left peekers),     find a forbidden poset in $P$ as follows and terminate.  Locate two right peekers $u_1,u_2$ and one left peeker $u_3$ into $wx$.  The elements $u_1,u_2, x, u_3, w$ induce the  poset $Y$ in $P$.  Similarly, if there are two left peekers, we discover the dual of $Y$.    In either of these cases, \emph{return these elements and terminate.}  Otherwise, remove index $i$ from the queue (even though it is not at the front) and increment $j$.
  If for all $j$,  the algorithm does not terminate in this case, then the representation satisfies conclusion (3) of Proposition~\ref{claims}.   

\end{enumerate}

\medskip
\noindent
{\bf Output of Stage 2:}  Either a forbidden poset from the set \{$\fourone$, $\threeoneone$, $Y$, dual of $Y$, $D$\} or else  matrix $A$, the updated matrix $B$, and  a closed interval representation $\cal I$ of $P$ that satisfies the three conclusions of Proposition~\ref{claims}.
 \medskip

\medskip
\noindent
   {\bf Subroutine Locate Peekers:}  

\noindent
{\bf Input: }
 $x,y \in X$ with $I(x) \subset I(y)$.

\smallskip

\noindent
{\bf Output:}  a set of at most two left peekers into $yx$.

\smallskip

Use matrix $B$ to find left indices $i$  of $x$ and $k$ of $y$.  By the construction of $Q$ we know $k < i$.    Any index between $k$ and $i$ representing a right endpoint corresponds to a left peeker into $yx$.    Start at $i$ and scan back towards $k$ to locate up to two such indices.   Use matrix $A$ to locate the element(s) that are assigned to these indices,  \emph{return these element(s) and terminate the subroutine.}

\medskip

At the end of Stage 2, the algorithm has either identified and returned a forbidden configuration and terminated, or it has  identified and recorded in $B$ all proper inclusions.  Furthermore, the algorithm has verified that the closed interval representation $\cal I$ satisfies the conclusions of Proposition~\ref{claims}.    Thus each entry in the bottom two rows of $B$ is either empty or contains exactly  one element.  Furthermore, there is exactly one left peeker and one right peeker into each inclusion.  

 In Stage 3,  make two passes through the indices, retracting   peekers in the first pass and expanding each inner interval in the second.   The left indices of outer intervals are placed on a queue $\hat{Q}$, allowing us to keep track of outer intervals that intersect.   The algorithm either verifies that the result is a strict OC representation of $P$ or returns the forbidden poset $Z$   induced in $P$.  Finding a forbidden  $Z$ is achieved by the subroutine \emph{Locate Z,} which appears just after    Stage 3.

  \medskip

\noindent  {\bf Stage 3:     Retracting Peekers and Expanding Inner Intervals.}

\smallskip


\noindent
{\bf 3a. Retract Peekers:}  
Initialize an empty queue $\hat{Q}$.  For $ j = 1$ to $2|X|$ perform the following operations: 

Let $v = A_{1j}$.    If $B_{3v} = \emptyset$ (i.e., $I(v)$ does not contain another interval) then increment $j$.  
Otherwise, $I(v)$ contains exactly one other interval $I(u)$, where $u \in B_{3v}$.  

If $A_{2j} = L$, the next step either finds  a forbidden graph $Z$ induced in $P$, or else places $j$ on the back of $\hat{Q}$.  
Each index $i$ currently on $\hat{Q}$ is the left endpoint of an outer interval $I(v')$ with $L(v') < L(v)$.   For each such $i$,  use matrices $A$  and $B$ to locate $v'$ and $u'$ where $I(u') \subset I(v')$.    If for some $i$,  $I(v') \cap I(u) \neq \emptyset$ but $I(u) \cap I(u') = \emptyset$,  use the subroutine \emph{Locate $Z$} to find a forbidden poset $Z$ in $P$ and terminate.    Otherwise, place $j$ on the back of $\hat{Q}$.  
 If for all $j$, the algorithm does not terminate in this case, then we show $P$ does not contain an induced $Z$ as follows.   Suppose poset $Z$ with comparabilities $a \prec b \prec c \prec d$, $a \prec y$, $x \prec d$ is induced in $P$.  Then for  any closed interval representation of $P$,  when $j$ is the index of $L(y)$, the index $i$  of $L(x)$ is still on $\hat{Q}$ and the subroutine \emph{Locate $Z$}  is called with the elements $u=c$, $v = y$, $u'=b$ and $v' = x$.  
 
If $A_{2j} = R$, locate the index of the left endpoint of $v$ and remove it from $\hat{Q}$.  Locate the left peeker $x$ and the right peeker $y$ into $vu$.  Retract these peekers by redefining $R(x) := L(v)$ and $L(y) := R(v)$.  Increment $j$.    If the algorithm has not terminated, then the revised set of intervals provides a representation of $P$ that satisfies the hypotheses of Proposition~\ref{prop-expand}.

\medskip
\noindent
{\bf 3b. Expand Inner Intervals:}  \ For each $u \in X$, if $B_{2u} \neq \emptyset$ then $B_{2u} = \{v\}$ for some $v \in X$.     
    Expand $u$ to meet $v$ by redefining $L(u) := L(v)$ and $R(u) := R(v)$ and making the interval for $u$ open.    By Proposition~\ref{prop-expand}, the resulting set of intervals is a strict OC interval representation of $P$.
    
\medskip
\noindent
{\bf Output of Stage 3:}  Either a forbidden graph $Z$ induced in $P$ or otherwise a strict  OC interval representation of $P$.
\medskip

 \noindent
 {\bf Subroutine:  Locate $Z$}
 
 \noindent
 {\bf Input:}  
  Elements $u,v,u',v' \in X$ with $I(u) \subset I(v)$, $I(u') \subset I(v')$,    $L(v') < L(v)$,  $I(v') \cap I(u) \neq \emptyset$  and    $I(u') \cap I(u) = \emptyset$.
  
  \noindent
  {\bf Output: }  Elements that induce the forbidden poset $Z$ in $P$.
   
  Observe that $I(v) \cap I(u') \neq \emptyset$, for otherwise there would be  two right peekers into $v'u'$, namely $v$ and $u$, a contradiction.   Locate   the left peeker $w_1$ into  $v'u'$ and    the right peeker $w_2$ into $vu$.  Note that $I(w_1) \cap I(v) = \emptyset$ for otherwise $I(v)$ would contain both $I(u)$ and $I(u')$, a contradiction.  Similarly, $I(w_2) \cap I(v') = \emptyset$.   The elements $w_1,u',u,w_2,v',v$ induce the forbidden poset $Z$ in $P$.  \emph{Return these six elements and terminate the subroutine.}
  
  \medskip
  It is easy to verify the following run time for our algorithm.
\begin{remark}{\rm  Algorithm OC Interval Order runs in time that is quadratic in $|X|$.}

\end{remark}

\section{Forbidden posets characterization when twins are allowed}

Let $\cal T$ be the set of posets $\fourone, D^*, Z, Y^*, Y^{**}$ shown in Figure~\ref{fig-more-forbid}, together with their duals.  In this section, we prove the following theorem that characterizes the class of unit OC interval orders.

\begin{figure}
 \begin{picture}(300,100)(0,0)
\thicklines

\put(20,20){\circle*{5}}
\put(20,40){\circle*{5}}
\put(20,60){\circle*{5}}
\put(20,80){\circle*{5}}
\put(35,50){\circle*{5}}
\put(20,20){\line(0,1){60}}
 \put(8,0){\fourone}


 \put(160,20){\circle*{5}}
\put(160,40){\circle*{5}}
\put(160,60){\circle*{5}}
\put(160,80){\circle*{5}}
\put(145,65){\circle*{5}}
\put(175,35){\circle*{5}}

\put(160,20){\line(0,1){60}}
\put(145,65){\line(1,1){15}}
\put(160,20){\line(1,1){15}}
 \put(155,0){$Z$}

  \put(80,20){\circle*{5}}
\put(80,80){\circle*{5}}
\put(65,50){\circle*{5}}
\put(95,50){\circle*{5}}
\put(110,50){\circle*{5}}
  \put(110,20){\circle*{5}}

\put(80,20){\line(1,2){15}}
\put(80,20){\line(-1,2){15}}
\put(65,50){\line(1,2){15}}
\put(95,50){\line(-1,2){15}}
\put(95,50){\line(1,-2){15}}
\put(75,0){$D^*$}

  \put(220,20){\circle*{5}}
\put(220,50){\circle*{5}}
\put(205,80){\circle*{5}}
\put(235,80){\circle*{5}}
\put(235,50){\circle*{5}}
\put(228,66){\circle*{5}}

\put(220,20){\line(0,1){30}}
\put(220,50){\line(1,2){15}}
\put(220,50){\line(-1,2){15}}
\put(235,80){\line(0,-1){30}}
\put(217,0){$Y^*$}

  \put(290,20){\circle*{5}}
\put(290,50){\circle*{5}}
\put(275,80){\circle*{5}}
\put(305,80){\circle*{5}}
\put(305,50){\circle*{5}}
\put(320,50){\circle*{5}}

\put(290,20){\line(0,1){30}}
\put(290,50){\line(1,2){15}}
\put(290,50){\line(-1,2){15}}
\put(305,80){\line(0,-1){30}}
\put(287,0){$Y^{**}$}


  \end{picture}

\caption{Forbidden posets which, with their duals,  comprise the set $\cal T$.}

\label{fig-more-forbid}
 \end{figure}
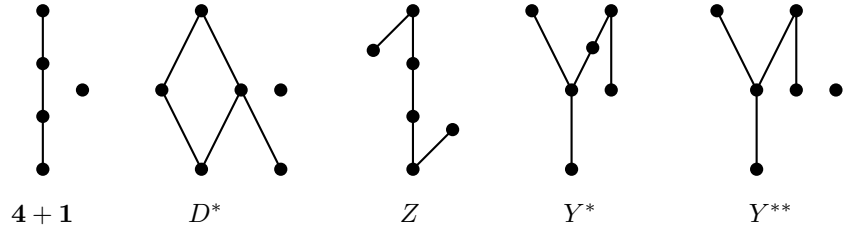

%

\begin{theorem}
An interval order is a unit OC  interval order if and only it has no induced poset from the forbidden set ${\cal T}$.

\label{thm-forbid}
\end{theorem}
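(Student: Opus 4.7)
The strategy is to reduce Theorem~\ref{thm-forbid} to the twin-free characterization in Theorem~\ref{big-thm} by passing to the quotient $P/{\sim}$ of $P$ by twin equivalence. Twins in any poset are incomparable, so a unit OC representation of $P/{\sim}$ extends to one of $P$ by assigning a common interval to all members of each twin class, and conversely one recovers a representation of $P/{\sim}$ from one of $P$ by selecting a single representative per class. Since $P/{\sim}$ is twin-free and inherits the interval-order property, Theorem~\ref{big-thm} implies that $P$ is a unit OC interval order if and only if $P/{\sim}$ contains no induced member of ${\cal F}$; it therefore suffices to show that $P$ contains an induced member of ${\cal T}$ if and only if $P/{\sim}$ contains an induced member of ${\cal F}$.

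For the easy direction, each $T \in {\cal T}$ is twin-free (by inspection of Figure~\ref{fig-more-forbid}) and contains an induced member of ${\cal F}$: removing the ``distinguishing'' vertex from $D^*$, $Y^*$, $Y^{**}$, or their duals recovers $D$, $Y$, or $Y^{\text{dual}}$ respectively, while $\fourone$ and $Z$ already lie in ${\cal F}$. If $P$ contains an induced copy of some such $T$, the twin-freeness of $T$ forces the vertices of this copy into distinct twin classes of $P$, so their image in $P/{\sim}$ is an embedded copy of $T$, which contains the corresponding member of ${\cal F}$.

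For the converse, suppose $P/{\sim}$ has an induced $F \in {\cal F}$ on vertices $\{x_1, \ldots, x_k\}$; choose lifts $\tilde x_i \in P$, which induce $F$ in $P$. If $F \in \{\fourone, Z\}$ then $F \in {\cal T}$ and we are done. Otherwise $F \in \{\threeoneone, D, Y, Y^{\text{dual}}\}$ has a twin pair $\{u, v\}$ whose lifts lie in distinct twin classes of $P$, so some $w \in P$ is comparable to exactly one of $\tilde u, \tilde v$ or to both in opposite directions. The bulk of the proof is then a case analysis on $F$ and on $w$'s relations with the remaining lifted vertices, guided by the $\twotwo$-free condition of Proposition~\ref{prop-trotter}: crucially, $w$ cannot be incomparable to every chain element of $F$, since such a $w$ would manufacture a $\twotwo$ on $w$, one of $\tilde u, \tilde v$, and a $2$-chain of $F$. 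Running through the remaining configurations shows that $\{\tilde x_i\} \cup \{w\}$ induces a listed member of ${\cal T}$: $\fourone$ when $w$ extends a chain of $F$ to length four; $D^*$ when $F = D$ and $w$ distinguishes the middle pair laterally; $Y^*$ or $Y^{**}$ when $F = Y$ (covering the two non-isomorphic placements of $w$ relative to the chain $\tilde a \prec \tilde d \prec \tilde c$); and analogously for $Y^{\text{dual}}$. Borderline placements that instead produce a different $F' \in {\cal F}$ in $P/{\sim}$ (for instance, a $\threeoneone$-distinguisher that completes a diamond and yields $D$) are handled by iterating the argument on $F'$.

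The main obstacle is this final case analysis: $w$'s comparabilities with the non-twin lifted vertices are only partially constrained by $\twotwo$-freeness, and one must enumerate every feasible placement and verify that it either matches a listed member of ${\cal T}$ or reduces to an already-handled case. The presence of both $Y^*$ and $Y^{**}$ in ${\cal T}$ reflects two genuinely different positions of the distinguishing element relative to a lifted $Y$, and verifying that ${\cal T}$ is complete---that no further forbidden posets are needed---amounts to tracing each branch of the casework to a termination in ${\cal T}$ or in a previously-handled $F' \in {\cal F}$.
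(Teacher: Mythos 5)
Your proposal is correct and follows essentially the same route as the paper: pass to the twin-free quotient (the paper's $P'$, one representative per twin class), invoke the twin-free characterization to extract an induced member of ${\cal F}$, and then use an element of $P$ distinguishing the internal twin pair of that forbidden poset, case-analyzing its remaining comparabilities (with $\twotwo$-freeness) to land in ${\cal T}$. The only cosmetic differences are that you derive the forward direction from Theorem~\ref{big-thm} applied to the twin-free posets of ${\cal T}$ themselves rather than arguing directly from forced-identical intervals as the paper does, and your converse's case analysis is sketched at the same level of detail as the paper's, which writes out only the $D$ case and declares the \threeoneone{} and $Y$ cases ``similar.''
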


\begin{proof}  In this proof we use the notation  and build on the arguments introduced in the proof of Proposition~\ref{prop-forbid}.  First we show that the posets in $\cal T$ are not unit OC interval orders.  In the proof of Proposition~\ref{prop-forbid}, we showed that the posets $\fourone$ and $Z$ are not unit OC interval orders even if twins are allowed.  Any unit OC interval
  representation of poset  $D, Y$,  or the dual of $Y$ requires the intervals for  $b$ and $c$ to be identical.  The posets $D^*, Y^*, Y^{**}$ and their duals  each contain an additional element $y$  comparable to one of $b, c$ but not the other.  Thus in any     unit OC interval representation of  $D^*, Y^*,Y^{**}$ or their duals,  the intervals for $b$ and $c$ would not be identical.  If the interval for $y$ were removed, the result would be a unit OC interval representation of $D, Y$, or the dual of $Y$ in which the intervals for $b$ and $c$ are not identical, a contradiction.

  For the converse, let $P = (X,\prec)$, and suppose $P$ is an interval order which is not a unit OC interval order.  We will show it contains a poset from $\cal T$ as an induced subposet.
  Let $P'$ be the poset obtained from $P$ by removing duplicate points, that is, from each set of points with the same comparabilities,   take one representative.  Poset $P'$ is induced in $P$ but is twin-free.  If $P'$ had a unit OC interval representation, we could give identical intervals to duplicate points and form a unit OC interval representation of $P$, a contradiction.  Thus $P'$ is a twin-free interval order that is not a unit OC interval order.  By Proposition~\ref{prop-forbid}, we know $P'$ contains an induced poset from $\cal F$.  If $P'$ contains $\fourone$ or $Z$ then $P'$ (and hence $P$) contains a poset from $\cal T$ as desired.  We give the argument for $P'$ containing the poset $D$ below.  The arguments for $P'$ containing $\threeoneone$ or $Y$ are similar.

    Suppose $P'$ contains $D$ with ground set $\{a,b,c,d,x\}$ and comparabilities $a \prec b \prec d$, $a \prec c \prec d$.  Since $b$ and $c$ are not twins in $P$, there must exist an element $y$ comparable to one, say $c$, but not the other, $b$.  First suppose $y \prec c$.   If $y \prec x$,   poset $P$ contains an induced $\twotwo$,  and if $x \prec y$ then transitivity implies $x \prec c$, each giving a contradiction.      Thus $x \parallel y$.  If $a \prec y$,   poset  $P$ contains an induced $\fourone$, which is an element of $\cal T$.  The remaining possibility is  $a \parallel y$ and this results in the poset $D^*$ induced in $P$.  If instead  $c \prec y$,  we get similar contradictions or the dual of $D^\ast$.   \qed
\end{proof}
\smallskip
 
\noindent \emph{Acknowledgements:}  The authors are grateful to Tom Trotter, Mitch Keller, and David Howard for helpful discussions of the proof of Theorem~\ref{thm-green}.

\end{document}